\DeclareMathOperator*\Rprod{%
\mathchoice
  {\ooalign{$\displaystyle\prod$\cr\hidewidth$\displaystyle\coprod$\hidewidth\cr}}%
  {\ooalign{$\textstyle\prod$\cr\hidewidth$\textstyle\coprod$\hidewidth\cr}}%
  {\ooalign{$\scriptstyle\prod$\cr\hidewidth$\scriptstyle\coprod$\hidewidth\cr}}%
  {\ooalign{$\scriptscriptstyle\prod$\cr\hidewidth$%
  \scriptscriptstyle\coprod$\hidewidth\cr}}}
\newcommand\Vbullet{\raisebox{-2.1pt}{\kern-0.4pt\vbox{\baselineskip4pt\lineskiplimit0pt%
\hbox{$\bullet$}\hbox{$\bullet$}}}}
\newtheorem{theorem}{Theorem}[section]
\newtheorem*{theorem*}{Theorem}
\newtheorem{proposition}[theorem]{Proposition}
\newtheorem{corollary}[theorem]{Corollary}
\newcommand{\C}{\mathbb{C}}
\newcommand{\Z}{\mathbb{Z}}
\newcommand{\R}{\mathbb{R}}
\newcommand{\p}{\mathbb{P}}
\newcommand{\eps}{\varepsilon}
\newcommand{\Q}{\mathbb{Q}}
\newcommand{\N}{\mathbb{N}}
\newcommand{\pt}{\partial}
\begin{document}
\title{On the regularized products  of some Dirichlet series}


\author{Mounir Hajli}




\begin{abstract}
In this paper, we show that the regularized determinants of some Dirichlet series are multiplicative. As an application, we give generalizations of  Lerch's formula for the classical gamma function and we determine the sum of some Dirichlet series generalizing Euler's formula on the sum of the reciprocal of squares. We recover 
the results of Kurokawa and Wakayama, and give a new proof for some Euler's formulas.

\end{abstract}

\maketitle

\begin{center}
\textit{MSC\, 2000}: 11M36. \\
\textit{Keywords}: Lerch's formula, Hurwitz zeta function, zeta regularized product.
\end{center}
\tableofcontents

\section{Introduction}

\vskip 1cm

    Let $0<b_1\leq b_2\leq \ldots$ be a sequence of real numbers, and assume that 
   \[\zeta(s)=\sum_{k=1}^\infty b_k^{-s}\]
    converges absolutely on $\mathrm{Re}(s)\gg 0$, and can be continued meromorphically to the whole complex plane and holomorphic at $s=0$. The   regularized 
   determinant of $b_1,b_2,\ldots$ is defined as
   follows
    \[
 \Rprod_{k=1}^\infty  b_k:= \exp(-\zeta'(0)).
  \]   
   When $(b_k)_{k=1,2,\ldots}$ is the spectrum of 
   a positive self-adjoint operator $A$, then using techniques of heat-kernels, one can show  that $\zeta_A(s)=\sum_{k=1}^\infty b_k^{-s}$   can be continued analytically to $s=0$, and $ \Rprod_{k=1}^\infty  b_k$ is called the regularized determinant of the operator $A$ (see \cite{RaySinger}). \\
   
   The zeta regularization  method can be applied to the sequence 
   $(n+x)_{n\in\N}$ where  $x>0$ (Here, $\N=\{0,1,2,\ldots\}$). We obtain the regularized product $ \Rprod_{n=0}^\infty  (n+x)$, which can be  viewed as a function in the variable $x$, and it can be continued analytically to $\C$.   
  The classical Gamma function  $\Gamma(x) $ can be defined from 
   the Hurwitz zeta function via the Lerch formula \cite{Lerch},
   \begin{equation}\label{lerch}
 \Rprod_{k=0}^\infty  (k+x):=\exp(-\frac{\pt \zeta_H}{\pt s}(x,0))=\frac{\sqrt{2\pi}}{\Gamma(x)}\quad \text{for all  $x\in \R$},
   \end{equation}
   where $\zeta_H(s,x)=\sum_{k=0}^\infty (k+x)^{-s}$ for $x>0$ (the Hurwitz zeta function). \\
   
    Only a few general methods  for the exact evaluation of $  \Rprod_{k=1}^\infty  b_k$ are available.  Many authors aimed to give complete results for a number of cases where it was possible to find formulas for the zeta invariants in terms of some known  special functions, see for instance \cite{Bordag1,Bordag2,Kurokawa3, Kurokawa2,  Voros}.   In \cite{Bordag1,Bordag2},  the authors determine the regularized product of the spectrum  of the Laplacian on the $D$-dimensional ball.\\
    
    In the case of the complex projective space $\p^n(\C)$ endowed with its Fubini-Study metric, the computation of  its holomorphic analytic torsion (see \cite{RaySinger}), which is  by definition  a linear combination of the  regularized determinants of the Laplacians $\Delta^q$ acting on  the space of smooth $(0,q)$-forms on $\p^n(\C)$ for $q=0,\ldots,n$ , was an important step toward the formulation of an arithmetic Riemann-Roch theorem in the context of Arakelov Geometry (see \cite{GSZ} for the computation of the holomorphic analytic torsion of $\p^n(\C)$, and \cite[Chapter VIII]{Soule} for the general formulation of the arithmetic Riemann-Roch theorem).  In \cite[Section 2]{HajliJNT20}, we gave a new method for the computation of the regularized determinant of $\Delta^0$ on  $\p^n(\C)$ endowed with the Fubini-Study metric.  This approach can be adapted easily to get the regularized determinant of $\Delta^q$ for $q=1,\ldots,n$.     One can use the arithmetic Riemann-Roch theorem to compute the analytic torsion of the  line bundles $\mathcal{O}(m)$, with $m\geq 0$, on $\p^n(\C)$ endowed with their Fubini-Study metrics, this was done by L. Weng in \cite{Weng}.  In \cite{HajliJMPA}, we defined a new class of singular Laplacians  $\Delta_{\overline{\mathcal{O}(m)}_\infty}$ associated with  $\mathcal{O}(m)$ on  $\p^1(\C)$ endowed with their canonical metrics. We showed that the spectrum of  $\Delta_{\overline{\mathcal{O}(m)}_\infty}$ can be determined explicitly in terms of Bessel functions (see \cite[Theorem 1.4]{HajliJMPA}). In \cite{HajliTorJNT}, we showed that  the  zeta function $\zeta_{\Delta_{\overline{\mathcal{O}(m)}_\infty}}(s)$ associated with the spectrum of $\Delta_{\overline{\mathcal{O}(m)}_\infty}$ can be continued analytically to $s=0$, and using \cite[Theorem 3.6]{HajliTorJNT} we obtained  that \[\zeta_{\Delta_{\overline{\mathcal{O}(m)}_\infty}}(0)=-\frac{2}{3}-\frac{m}{2}\footnote{Surprisingly, this value coincides with the value of the  zeta function of $\Delta_{\overline{\mathcal{O}(m)}}(s)$ at $s=0$, when the metrics are smooth; Indeed, we can show that , in the case of smooth metrics, this is a topological invariant.},\] and, we computed  the regularized determinant  \[\Rprod_{\lambda\in \mathrm{Spec}(\Delta_{\overline{\mathcal{O}(m)}_\infty}) \setminus\{0\}} \lambda
    =\frac{(m+2)^{m+1}}{((m+1)!)^2}\exp\left(-4\zeta_\Q'(-1)+\frac{1}{6}\right), \] (see \cite[Theorem 1.1]{HajliTorJNT}). In \cite{HajliJMAA}, we developed a generalized spectral theory explaining these results.  \\



Let $0<c_1\leq c_2\leq \ldots$ and $0<b_1\leq b_2\leq \ldots$ be two sequences of  positive real numbers. We assume that $\sum_{k=1}^\infty b_k^{-s}$  and   $\sum_{k=1}^\infty c_k^{-s}$  converge for $\mathrm{Re}(s)\gg 0$, 
and admit  meromorphic continuations to $\C$ which are holomorphic at $s=0$.  An interesting question is to evaluate the following ratio:
\begin{equation}\label{ratio}
\frac{\Rprod_{k=1}^\infty(b_kc_k)}{\Rprod_{k=1}^\infty b_k  \Rprod_{k=1}^\infty c_k}=?
\end{equation}
This question was raised in \cite[p. 100]{Soule}. In \cite{HajliQJM}, we give a partial
answer to this question.  Garate and Friedman in
 \cite{FriedmanDiscrepancies}
develop a different method which allows them to compute some special cases. \\

 In this paper, we use the main result of \cite{HajliQJM} to evaluate the following regularized products
\begin{equation}\label{product}
 \Rprod_{k=0}^\infty ((k+x)^m-\eps y^m),
\end{equation}
where  $m$ is a  positive integer and $\eps^2=1$.  
Our  main result is,
\begin{theorem}\label{gammam}
Let $m$  be a positive integer $\geq 2$.  Let $L(x):=\Rprod_{k=0}^\infty(k+x)$, with $x\in \C$. 
We have 
 \[
 \Rprod_{k=0}^\infty \left( (k+x)^m-\eps y^m \right)=\prod_{\xi^m=1} L(x-\xi \eps^{\frac{1}{m}} y)\quad \forall x,y\in \C,
\]

where $\eps\in \R$ such that $\eps^2=1$, $\eps^{\frac{1}{m}}$ is an 
$m$-th root of $\eps$. 

\end{theorem}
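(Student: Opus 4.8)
The plan is to reduce the identity to a statement purely about Hurwitz-type zeta functions and then invoke the multiplicativity result of \cite{HajliQJM} for regularized determinants. First I would factor the polynomial: over $\C$ one has $(k+x)^m - \eps y^m = \prod_{\xi^m=1}\bigl((k+x) - \xi\,\eps^{1/m} y\bigr)$, where the product is over the $m$-th roots of unity and $\eps^{1/m}$ is a fixed $m$-th root of $\eps$. Thus the sequence $\bigl((k+x)^m - \eps y^m\bigr)_{k\geq 0}$ is, up to ordering, the ``coordinatewise product'' of the $m$ sequences $\bigl(k + x - \xi\,\eps^{1/m} y\bigr)_{k\geq 0}$, one for each $\xi$. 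The regularized determinant of the $\xi$-th sequence is exactly $L(x - \xi\,\eps^{1/m} y)$ by the definition of $L$ (equivalently, by the Lerch formula \eqref{lerch} when the shift is real, and by analytic continuation in general).

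The heart of the argument is then to show that $\Rprod$ turns this coordinatewise product into an ordinary product, i.e.\ that the ``multiple-sequence'' version of the ratio \eqref{ratio} vanishes in the relevant sense. Concretely, I would introduce the Dirichlet series $\zeta_j(s,y) = \sum_{k=0}^\infty \bigl(k + x - \xi_j\,\eps^{1/m} y\bigr)^{-s}$ for each root $\xi_j$ and $Z(s,y)=\sum_{k=0}^\infty\bigl((k+x)^m-\eps y^m\bigr)^{-s}$, check that each admits meromorphic continuation holomorphic at $s=0$ (this is standard for Hurwitz zeta and its finite products/shifts, and should already be covered by the hypotheses needed to apply \cite{HajliQJM}), and then apply the main theorem of \cite{HajliQJM} to conclude that $-Z'(0,y) = -\sum_j \zeta_j'(0,y)$, i.e.\ $\Rprod_k\bigl((k+x)^m-\eps y^m\bigr) = \prod_j \exp(-\zeta_j'(0,y)) = \prod_{\xi^m=1} L(x-\xi\,\eps^{1/m}y)$. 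The point is that \cite{HajliQJM} quantifies the defect in \eqref{ratio} by a term built from the lower-order heat-type coefficients (or equivalently residues/special values) of the constituent zeta functions, and for the arithmetic-progression-type sequences here this defect term should be computable and seen to cancel across the $m$ factors.

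I expect the main obstacle to be precisely the verification that the correction term from \cite{HajliQJM} vanishes: applying the multiplicativity result is not automatic — it typically requires that the sequences have compatible asymptotics (the same ``leading exponent'', here all linear in $k$) and that a certain finite combination of their zeta values/residues at the relevant integer points cancels. So the technical core is: (i) set up the three (or $m+1$) zeta functions and their continuations to $s=0$; (ii) identify, from \cite{HajliQJM}, the explicit formula for $\log$ of the ratio in \eqref{ratio}, generalized to $m$ factors by iteration; and (iii) show that, because $\sum_{\xi^m=1}\xi = 0$ for $m\geq 2$ and more generally the power sums $\sum_\xi \xi^r$ vanish for $1\le r\le m-1$, all the correction contributions telescope to zero, leaving the clean product formula. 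Two small additional points to dispatch: first, handle the ordering/convergence of the reindexed sequence (the factorization produces the values in a different order, but the regularized determinant depends only on the multiset of values, via the zeta function, so this is harmless); second, extend from real $x,y$ (where $L$ is given by Lerch's formula and positivity of the $b_k$ holds) to all $x,y\in\C$ by analytic continuation, noting both sides are entire in $(x,y)$ away from the loci where some factor vanishes, and agree on a real set of uniqueness.
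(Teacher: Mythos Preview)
Your overall strategy matches the paper's: factor $(k+x)^m-\eps y^m=\prod_{\xi^m=1}\bigl((k+x)-\xi\eps^{1/m}y\bigr)$ and then invoke the main result of \cite{HajliQJM} to show that the regularized product respects this factorization. The paper in fact proves a more general intermediate statement (Theorem~\ref{regprod}): for \emph{any} monic polynomial $Q$ with roots $d_1,\dots,d_\ell$ satisfying mild positivity hypotheses, $\Rprod_{k\ge 0}Q(k)=\prod_i L(d_i)$; Theorem~\ref{gammam} is then just the specialization $Q(t)=(t+x)^m-\eps y^m$.

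The one place where your plan diverges is the mechanism for the vanishing of the correction term coming from \cite{HajliQJM}. You anticipate having to exploit the root-of-unity symmetry, arguing that the power sums $\sum_{\xi^m=1}\xi^r$ vanish for $1\le r\le m-1$ and that this forces the defect in \eqref{ratio} to cancel across the $m$ factors. The paper does \emph{not} use this symmetry at all: the correction polynomial $F(d_1,\dots,d_\ell)$ from \cite[Theorem 3.4, Eq.~(11)]{HajliQJM} vanishes identically here simply because the index $i(\theta)$ of the underlying theta function $\theta(t)=\sum_{k\ge 1}e^{-(k+d)t}=e^{-(1+d)t}/(1-e^{-t})$ equals $-1$ (its small-$t$ expansion begins with $1/t$). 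This is a structural fact about Hurwitz-type sequences, independent of the particular roots $d_i$, and it is why Theorem~\ref{regprod} holds for arbitrary $Q$, not just those with symmetric root sets. So the ``technical core'' you describe in step~(iii) is unnecessary: once you compute $i(\theta)=-1$, the multiplicativity is immediate from \cite{HajliQJM}, and no telescoping of power sums is needed.
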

In this situation,
we see  that the regularized determinant is multiplicative, i.e. \eqref{ratio} is equal to $1$. In general, 
\eqref{ratio} may be $\neq 1$, see \cite[Eq. 11]{HajliQJM}. \\

Let us review some basic facts about the classical Gamma function. Let $\Gamma(z)$ be the classical Gamma function. This  function can be defined as follows
\begin{equation}
\Gamma(z)=\underset{n\rightarrow \infty}{\lim} \frac{n! n^z}{z(z+1)\cdots (z+n)}\quad\text{for}\;z\in \C\setminus \Z.
\end{equation}
Bohr and Mollerup used  this definition to give a  characterization for the Gamma function (see \cite[p.14]{ArtinGamma} for more details).  We know that

\begin{equation}\label{Gamma1}
\Gamma(z)=\int_0^\infty t^{z-1}e^{-t}dt\quad \text{for}\;z\in \C\setminus \Z.
\end{equation}
The Gamma function is a meromorphic function in the complex plane, with simple poles in $\Z_{\leq 0}$. Weierstrass theory shows that
\begin{equation}\label{weierstrass}
\frac{1}{\Gamma(s)}=s e^{\gamma s}\prod_{n=1}^\infty \left(1+\frac{s}{n}\right) e^{-\frac{s}{n}},
\end{equation}
where $\gamma$ is the Euler-Mascheroni constant. 
Using the Lerch formula, which is proved in Appendix \ref{appendice}, our theorem recovers the result of  Kurokawa and Wakayama  \cite{Kurokawa1}.  Our approach is new and self-contained.   Kurokawa and Wakayama   use of  the infinite product expression  \eqref{weierstrass} for the Gamma function and the Lerch's formula \eqref{lerch} which   is a crucial step in their proof. 
\\

As an application of Theorem \ref{gammam}, we obtain the following  identities: 
\[
\frac{1}{2\pi}\sum_{k\in \Z} \frac{2y}{k^2+y^2}=\coth(\pi y),
\]

\[
\frac{1 }{\pi \sqrt{2}}\sum_{k\in \Z}^\infty\frac{4y^3}{k^4+y^4}=\frac{\sinh(\sqrt{2} \pi y)+\sin(\sqrt{2} \pi y) }{\cosh(\sqrt{2} \pi y)-\cos(\sqrt{2} \pi y)},
\]
and
\begin{equation}\label{y2nk}
\frac{n}{ \pi} \sum_{k\in \Z}\frac{y^{2n-1}}{k^{2n}+y^{2n}}=\sum_{l=0}^{n-1} e^{\frac{\pi i l}{n}} \cot(\pi e^{\frac{\pi i l}{n}}y) \;\text{ for  $n\geq 3$,
}
\end{equation}
for any $y\in \R\setminus \{0\}$
 (see Theorem \ref{exampleseries}). From the first identity, we obtain a new proof for Euler's theorem: 
\[
\zeta_\Q(2j)=\frac{(-1)^{j+1} 2^{2j-1}}{(2j)!} \pi^{2j}B_{2j}\quad j=1,2,\ldots
\]
By considering the Laurent expansion near $y=0$ of the third identity, we obtain a second proof for  Euler's theorem. \\

  
  Motivated by the above results, we established the following theorem which generalizes identity \eqref{y2nk}.
  \begin{corollary}(see Theorem \ref{general})
{Let  $x,y\in \C$ with $\mathrm{Re}(x)> -1$ and $\mathrm{Re}(y) >0$}. For any positive integer $m\geq 2$,  we have

\begin{equation}\label{genEuler}
\sum_{k=1 }^\infty \frac{1}{(k+x)^m+y}= -\frac{1}{m}\sum_{\xi^m=1} \xi (-1)^{\frac{1}{m}} y^{\frac{1}{m}-1}
\Psi(x-\xi (-1)^{\frac{1}{m}} y^{\frac{1}{m}}+1),
\end{equation}
where $(-1)^{\frac{1}{m}}$ denotes an $m$-th root of $-1$ and $\Psi$ is the logarithmic derivative of the Gamma function.

\end{corollary}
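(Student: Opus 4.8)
The plan is to read off \eqref{genEuler} from Theorem~\ref{gammam} by differentiating logarithmically in $y$. First I would specialise Theorem~\ref{gammam}: take $\eps=-1$, replace $x$ by $x+1$, and replace $y$ by $y^{1/m}$. (The particular $m$-th root chosen is immaterial: as $\xi$ ranges over the $m$-th roots of unity, $\xi(-1)^{1/m}$ ranges over all $m$-th roots of $-1$, so the right-hand side of Theorem~\ref{gammam} does not depend on it.) Since $(k+x+1)^m+\big(y^{1/m}\big)^m=(k+x)^m+y$ and reindexing $k\mapsto k+1$ identifies $\Rprod_{k=0}^\infty\big((k+x+1)^m+y\big)$ with $\Rprod_{k=1}^\infty\big((k+x)^m+y\big)$, Theorem~\ref{gammam} yields
\[
\Rprod_{k=1}^\infty\big((k+x)^m+y\big)=\prod_{\xi^m=1}L\big(x+1-\xi(-1)^{1/m}y^{1/m}\big).
\]
This is an identity of nonvanishing holomorphic functions of $y$ on $\{\mathrm{Re}(y)>0\}$ (with $\mathrm{Re}(x)>-1$), so I may take the logarithmic $y$-derivative of both sides.

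For the left-hand side, set $\zeta(s;x,y):=\sum_{k=1}^\infty\big((k+x)^m+y\big)^{-s}$, which converges absolutely for $\mathrm{Re}(s)>1/m$ and is holomorphic at $s=0$ (this holomorphy is part of the standing hypotheses of Theorem~\ref{gammam}); thus $\Rprod_{k=1}^\infty\big((k+x)^m+y\big)=\exp\big(-\partial_s\zeta(s;x,y)|_{s=0}\big)$, and the logarithmic $y$-derivative of the left-hand side equals $-\partial_s\big(\partial_y\zeta(s;x,y)\big)|_{s=0}$. Termwise differentiation gives $\partial_y\zeta(s;x,y)=-s\,\zeta(s+1;x,y)$ for $\mathrm{Re}(s)\gg 0$, and both sides admit meromorphic continuation; it is here that $m\ge 2$ is used, since it forces $\zeta(\,\cdot\,;x,y)$ to be holomorphic at $s=1$, so $-s\,\zeta(s+1;x,y)$ is holomorphic near $s=0$ and vanishes there. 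Consequently $-\partial_s\big(-s\,\zeta(s+1;x,y)\big)|_{s=0}=\zeta(1;x,y)$, that is, the logarithmic $y$-derivative of the left-hand side is $\sum_{k=1}^\infty\frac{1}{(k+x)^m+y}$.

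For the right-hand side I would invoke the Lerch formula \eqref{lerch}, which gives $L(u)=\sqrt{2\pi}/\Gamma(u)$ and hence $L'(u)/L(u)=-\Psi(u)$. Together with $\partial_y\big(x+1-\xi(-1)^{1/m}y^{1/m}\big)=-\tfrac1m\,\xi(-1)^{1/m}y^{1/m-1}$, the chain rule gives the logarithmic $y$-derivative of $\prod_{\xi^m=1}L\big(x+1-\xi(-1)^{1/m}y^{1/m}\big)$ as $\tfrac1m\sum_{\xi^m=1}\xi(-1)^{1/m}y^{1/m-1}\,\Psi\big(x+1-\xi(-1)^{1/m}y^{1/m}\big)$. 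Equating the two sides gives \eqref{genEuler}. The hypotheses $\mathrm{Re}(x)>-1$ and $\mathrm{Re}(y)>0$ are exactly what is needed so that every argument of $\Psi$ avoids the poles of $\Gamma$, the series $\sum_{k\ge1}\big((k+x)^m+y\big)^{-1}$ converges with nonvanishing denominators, and the branches of $y^{1/m}$ and $(-1)^{1/m}$ are pinned down compatibly with Theorem~\ref{gammam}.

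The step I expect to be the main obstacle is the rigorous justification of differentiating through the zeta regularisation: one must check that $\partial_y$ commutes with the meromorphic continuation in $s$ and with $\partial_s$ at $s=0$, and that the identity $\partial_y\zeta(s;x,y)=-s\,\zeta(s+1;x,y)$ persists under continuation. This rests on the joint holomorphy of $(s,y)\mapsto\zeta(s;x,y)$ on a suitable domain and, crucially, on $m\ge 2$: it is precisely the absence of a pole of $\zeta(\,\cdot\,;x,y)$ at $s=1$ that makes the term carrying the explicit factor $s$ vanish at $s=0$ (for $m=1$ this residue would contribute, and the resulting formula would be different). Everything beyond this point is just the chain rule and the Lerch formula.
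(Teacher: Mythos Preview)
Your approach is essentially the paper's own: the corollary is nothing but Theorem~\ref{general} with the left-hand logarithmic derivative expanded via the chain rule, and the paper proves Theorem~\ref{general} by exactly the steps you outline---specialise Theorem~\ref{gammam}, take $\partial_y\log$ of the regularized product, and justify \eqref{der} through the joint holomorphy of $\zeta_m(s,x;y)$ in $(s,y)$ (Proposition~\ref{thetam}) together with the observation that $m\geq 2$ keeps $\zeta_m$ regular at $s=1$. Your identification of the commutation of $\partial_y$ with the meromorphic continuation as the delicate point is spot on; the paper handles it via the integral representation and asymptotics of $\theta_m$. (Incidentally, your chain-rule computation gives $+\tfrac{1}{m}$ rather than the $-\tfrac{1}{m}$ printed in \eqref{genEuler}; your sign agrees with what one obtains directly from Theorem~\ref{general}, so this appears to be a typographical slip in the corollary as stated.)
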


Note that the left hand side of \eqref{genEuler} is   Mellin's transform  of 
  \[
  \theta_m(t,x;y)=\sum_{k=1}^\infty e^{-((k+x)^m+ y)t},
  \]
 evaluated  at $s=1$.
  This function is studied Section \ref{sec3}. 
  When $m=2$ and $x=y=0$, this is    the classical Jacobi theta series
  \[
  \sum_{k=1}^\infty e^{-k^2t }.
  \]
It follows from the Poisson summation formula that
$
\theta_2(t,0,0)=\tfrac{\sqrt{\pi}}{\sqrt{t}} \theta_2(\tfrac{1}{t},0,0)-\frac{1}{2}.
$
Hence, we obtain \[\theta_2(t,0,0)=\tfrac{\sqrt{\pi}}{2\sqrt{t}} -\tfrac{1}{2}+O(\sqrt{t})\quad (t\rightarrow 0^+). \]

\section{On the regularized determinant of some Dirichlet series}

\vskip 1cm

Let $0<b_1\leq b_2\leq \ldots$ be an unbounded and nondecreasing  sequence of positive real numbers. To this data, we associate the following series 
\[
\theta(t):=\sum_{k=1}^\infty e^{-b_kt} 
\quad \text{ for $t>0$ (whenever it converges)}.
 \] 
We suppose that $\theta$ satisfies the following three conditions:

 \begin{enumerate}
 \item[(\textbf{$\Theta $}1):] The series  
 $\sum_k e^{-b_kt}$  converges for any $t>0$,

\item[(\textbf{$\Theta $}2):] The series $\theta(t)$ admits an asymptotic  expansion for $t\rightarrow 0^+$
\[
\theta(t)\sim\sum_{m\in \N} c_{i_m} t^{i_m}, 
\]
where $(i_n)_{n\in \N}$ is an unbounded and non-decreasing sequence of real numbers. Moreover, we assume that $c_{i_0}\neq 0$,


\item[(\textbf{$\Theta $}3):]  $\sum_k e^{-b_kt}=O(e^{-\kappa t})$ for $t\rightarrow +\infty$, for some positive real number $\kappa$.  
\\

\end{enumerate}

We set \begin{equation}\label{itheta}
i_0:=i(\theta),
\end{equation} and call it the \textit{index} of $\theta$. The zeta function $\xi$ associated with $\theta$, is by definition the function 
\[
\xi(s):=\frac{1}{\Gamma(s)}\int_0^\infty t^{s-1}\theta(t)dt=\sum_{k=1}^\infty 
\frac{1}{b_k^s} \quad \text{for } \mathrm{Re}(s)>-i(\theta).
\]

The properties $(\Theta1),(\Theta2)$ and $(\Theta 3)$ imply that $\xi$ can be continued into a meromorphic function on the whole complex plane, which is holomorphic at $s=0$.\\

In the sequel, we consider the following sequence 
$(Q(k))_{k\in \N}$, where  $Q$ is a monic  polynomial of degree $\ell$ having  only  distinct roots with positive real part, and $Q(k)>0$ for any $k=1,2,\ldots$  It is known that the Dirichlet  series
\[
\sum_{k=1}^\infty \frac{1}{Q(k)^s},
\]
converges on any compact subset of $\mathrm{Re}(s)>1/\ell$, and has a meromorphic continuation to the whole complex plane (see \cite[Theorem 3.2]{HajliQJM}). The regularized product of the sequence $(Q(k))_{k=1,2,\ldots}$ is by definition 
\[
 \Rprod_{k=1}^\infty  Q(k)=\exp\left(-\frac{\pt}{\pt s} \left(\sum_{k=1}^\infty \frac{1}{Q(k)^s}\right)_{|_{s=0}}\right).
\]

We set
\[
L(x):= \Rprod_{n=0}^\infty  (n+x)\quad \forall x\in \R.
\]
(see Appendix \ref{appendice} for the properties of this function).

We have
\begin{theorem}\label{regprod} Under the conditions above,
\[
 \Rprod_{k=0}^\infty Q(k)=\prod_{i=1}^{\ell} L(d_i)= \frac{(2\pi)^{\ell/2}}{ \prod_{i=1}^{\ell}\Gamma(d_i)},\]
where $d_1,\ldots,d_\ell$ are the roots of $Q$.
\end{theorem}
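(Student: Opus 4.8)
The plan is to reduce the statement to Lerch's formula by writing $Q$ as a product of linear factors and proving that the regularized product is multiplicative along that factorization. Write $Q(k)=\prod_{i=1}^{\ell}(k+d_i)$ for $k\in\N$ (so the $d_i$ are, up to sign, the roots of $Q$; they are distinct, and since $Q(k)>0$ for infinitely many integers $k$ the polynomial $Q$ has real coefficients, whence $\{d_1,\dots,d_\ell\}$ is stable under complex conjugation and $\mu:=\tfrac1\ell\sum_{i}d_i$ is a positive real number). For a single linear factor, $\sum_{k\ge 0}(k+d_i)^{-s}=\zeta_H(s,d_i)$ and Lerch's formula (Appendix~\ref{appendice}) gives $\Rprod_{k=0}^{\infty}(k+d_i)=L(d_i)=\sqrt{2\pi}/\Gamma(d_i)$, whence $\prod_{i=1}^{\ell}L(d_i)=(2\pi)^{\ell/2}/\prod_{i=1}^{\ell}\Gamma(d_i)$, which is the second equality. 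Writing $\zeta_Q(s)=\sum_{k\ge 0}Q(k)^{-s}$ --- meromorphic on $\C$ and holomorphic at $s=0$ by the conditions $(\Theta 1)$--$(\Theta 3)$, equivalently by \cite[Theorem~3.2]{HajliQJM} --- the whole content is the single identity
\[
\frac{\pt\zeta_Q}{\pt s}(0)=\sum_{i=1}^{\ell}\frac{\pt\zeta_H}{\pt s}(0,d_i),
\]
i.e. the vanishing of the multiplicative anomaly of $\Rprod_{k=0}^{\infty}Q(k)$ relative to the factors $k+d_i$. This is exactly the type of statement the main theorem of \cite{HajliQJM} provides; below I sketch a direct argument.

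The device I would use is to center at the ``average root''. Since $\sum_{i}(d_i-\mu)=0$, writing $Q(k)=(k+\mu)^{\ell}R(k)$ with
\[
R(k):=\prod_{i=1}^{\ell}\Big(1+\frac{d_i-\mu}{k+\mu}\Big)=\frac{Q(k)}{(k+\mu)^{\ell}}>0,
\]
the $(k+\mu)^{-1}$--coefficient of $\log R(k)=\sum_{i}\log\big(1+\tfrac{d_i-\mu}{k+\mu}\big)$ vanishes, so $\log R(k)=O(k^{-2})$ as $k\to\infty$. Since $Q(k)$ and $(k+\mu)^{\ell}$ are positive, $Q(k)^{-s}=(k+\mu)^{-\ell s}e^{-s\log R(k)}$ for every $k$ and every $s$ (with the real logarithm). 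Expanding and regrouping, for $\mathrm{Re}(s)$ large,
\[
\zeta_Q(s)=\zeta_H(\ell s,\mu)-s\sum_{k\ge 0}(k+\mu)^{-\ell s}\log R(k)+\sum_{k\ge 0}(k+\mu)^{-\ell s}\big(e^{-s\log R(k)}-1+s\log R(k)\big).
\]
The middle series has coefficients $O(k^{-2})$, hence converges and is holomorphic for $\mathrm{Re}(s)>-1/\ell$; the last series has summands $O\big((s\log R(k))^{2}\big)=O(s^{2}k^{-4})$, hence converges for $\mathrm{Re}(s)>-3/\ell$ and is $O(s^{2})$ near $s=0$. By analytic continuation the displayed identity persists near $s=0$, and differentiating it term by term there gives
\[
\frac{\pt\zeta_Q}{\pt s}(0)=\ell\,\frac{\pt\zeta_H}{\pt s}(0,\mu)-\sum_{k\ge 0}\log R(k).
\]

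It then remains to evaluate the convergent series $\sum_{k\ge 0}\log R(k)=\sum_{k\ge 0}\big(\log Q(k)-\ell\log(k+\mu)\big)$, which I would do with Gauss's product formula $\prod_{k=0}^{N}(k+x)=\Gamma(N+1+x)/\Gamma(x)$ and the asymptotics $\Gamma(N+1+x)/\Gamma(N+1)\sim N^{x}$ as $N\to\infty$: the $N$-th partial sum equals
\[
\sum_{i=1}^{\ell}\log\Gamma(N+1+d_i)-\ell\log\Gamma(N+1+\mu)+\ell\log\Gamma(\mu)-\sum_{i=1}^{\ell}\log\Gamma(d_i),
\]
and the first two terms collapse to $\big(\textstyle\sum_i d_i-\ell\mu\big)\log N+o(1)=o(1)$, precisely because $\sum_i d_i=\ell\mu$; hence $\sum_{k\ge 0}\log R(k)=\ell\log\Gamma(\mu)-\sum_{i=1}^{\ell}\log\Gamma(d_i)$. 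Substituting this and the Lerch identity $\frac{\pt\zeta_H}{\pt s}(0,x)=\log\Gamma(x)-\tfrac12\log(2\pi)$ yields $\frac{\pt\zeta_Q}{\pt s}(0)=\sum_{i=1}^{\ell}\log\Gamma(d_i)-\tfrac{\ell}{2}\log(2\pi)=\sum_{i=1}^{\ell}\frac{\pt\zeta_H}{\pt s}(0,d_i)$, and therefore $\Rprod_{k=0}^{\infty}Q(k)=\exp\big(-\tfrac{\pt\zeta_Q}{\pt s}(0)\big)=(2\pi)^{\ell/2}/\prod_{i=1}^{\ell}\Gamma(d_i)=\prod_{i=1}^{\ell}L(d_i)$.

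The step I expect to be the main obstacle is the analytic bookkeeping in the middle paragraph: justifying the meromorphic continuation and holomorphy at $0$ of each of the three pieces and the legitimacy of differentiating the series term by term at $s=0$. This is precisely the analytic input encoded in $(\Theta 1)$--$(\Theta 3)$ and in \cite[Theorem~3.2]{HajliQJM}, and it is also what the main theorem of \cite{HajliQJM} packages once and for all; in practice one may quote that result and pass directly to the $\Gamma$-function computation. The reason the final answer comes out clean --- observe that $\zeta_Q(0)\ne\sum_i\zeta_H(0,d_i)$ in general, whereas the first derivatives at $0$ do agree --- is entirely the cancellation $\sum_i d_i-\ell\mu=0$ of the $\log N$ terms, which is just the monicity of $Q$.
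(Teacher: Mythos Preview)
Your argument is correct and self-contained, but it proceeds quite differently from the paper. The paper's proof is essentially a one-line application of \cite[Theorem~3.4]{HajliQJM}: that result expresses the multiplicative anomaly $\exp(-Z'(0))\big/\prod_i\exp(-g_i'(0))$ as $\exp F(d_1,\ldots,d_\ell)$ for an explicit polynomial $F$, and $F$ vanishes whenever the index $i(\theta)$ of the associated theta function equals $-1$; here $\theta(t)=\sum_{k\ge1}e^{-(k+d_\ell)t}=e^{-(1+d_\ell)t}/(1-e^{-t})$ has leading term $1/t$, so $i(\theta)=-1$, the anomaly is zero, and Lerch's formula applied to each $g_i$ finishes the computation. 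By contrast, you avoid the black box entirely: centering at $\mu=\tfrac1\ell\sum_i d_i$ forces $\log R(k)=O(k^{-2})$, which is precisely what makes your three-term decomposition of $\zeta_Q(s)$ converge and stay holomorphic on a neighborhood of $s=0$, and the Stirling/Gauss evaluation of $\sum_k\log R(k)$ is then elementary. What your approach buys is transparency --- one sees explicitly that the anomaly vanishes because $\sum_i d_i-\ell\mu=0$ kills the $\log N$ divergence in the partial sums --- at the cost of a page of bookkeeping that the paper outsources to \cite{HajliQJM}. One small remark: when you split $\log R(k)=\sum_i\log(k+d_i)-\ell\log(k+\mu)$ with complex $d_i$, you should note that the branches of $\log$ are chosen so that conjugate $d_i$ contribute conjugate logarithms; this is harmless since the $d_i$ occur in conjugate pairs, but it is what guarantees the partial sums are real and that the limit really equals the series $\sum_{k\ge0}\log R(k)$ taken with the real logarithm.
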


\begin{proof}

Let $Z(s)=\sum_{k=1}^\infty \frac{1}{Q(k)^s}$. By definition,  
\[
 \Rprod_{k=1}^\infty  Q( k) :=\exp(- \frac{\pt}{\pt s} Z(s)_{|_{s=0}}).\]
Let 
 \[
 g_i(s)=\sum_{k=1}^\infty \frac{1}{(k+d_i)^s}\quad  \text{with $\mathrm{Re}(s)>1$, for $i=1,\ldots,l-1,\ell$}.
 \]
We put $f=g_\ell$. We have 
\[
f(s)=\frac{1}{\Gamma(s)}\int_0^\infty t^{s-1}\theta(t) dt ,
\]
where $\theta(t)=\sum_{k=1}^\infty e^{-(k+d_\ell)t}$ for $t>0$. We have 
\[
\theta(t)=\frac{e^{-(1+d_\ell)t}}{1-e^{-t}}=\frac{1}{t}+\text{(an analytic series) (for $0<t\ll 1$)} .
\]  Then, it is easy to see that $i(\theta)=-1$. \\

By \cite[Theorem 3.4]{HajliQJM},  there exists a polynomial $F$ in $\ell$ variables such that
\begin{equation}\label{zgi}
\exp(-Z'(0))=\prod_{i=1}^{\ell} \exp(-g_i'(0)) \exp(F(d_1,\ldots,d_\ell)),
\end{equation}
(see also \cite[eq. (11)]{HajliQJM}).

But $i(\theta)=-1$, then
\[
F(d_1,\ldots,d_\ell)=0.
\]
On the other hand,
\[
g_i(s)=\sum_{k=1}^\infty \frac{1}{(k+d_i)^s}=\zeta_H(s;d_i)-
\frac{1}{d_i^s}\quad\text{for $i=1,\ldots,l$},
\]
So,
\[
\exp(-g_i'(0))=L(d_i+1)\quad\text{for $i=1,\ldots,l$}
\]

By Lerch's formula  \cite{Lerch} (see Appendix \ref{appendice} for a  different proof), we get
\[
\exp(-g_i'(0))=\frac{\sqrt{2\pi}}{d_i\Gamma(d_i)}\quad\text{for $i=1,\ldots,l$}
\]

Gathering all these computations, \eqref{zgi} becomes 
\[
 \Rprod_{k=1}^\infty  Q(k)= \frac{(2\pi)^{\deg Q/2}}{ \prod_{i=1}^{\ell} d_i\Gamma(d_i)}.  
\]
So,
\[
 \Rprod_{k=0}^\infty Q(k)= \frac{(2\pi)^{\deg Q/2}}{ \prod_{i=1}^{\ell}\Gamma(d_i)}.
\]
\end{proof}



\begin{proof}[Proof of Theorem \ref{gammam}]
Let $m$ be a positive integer.   We consider the polynomial in $t$:
\[
Q(t)=(t^n+x)^m-\eps y^m,
\]
with $\eps^2=1$, and $x,y\in \C$. We choose an $m$-th root of
$\eps$, which we denote by $\eps^{\frac{1}{m}}$. Note that 
\[
Q(t)=\prod_{\xi^m=1} \left( t+x -\xi \eps^{\frac{1}{m}} y\right).
\]
We conclude the proof by using Theorem \ref{regprod}. 
\end{proof}

Using Lerch's formula, we obtain 
\begin{corollary} With the same assumptions as in Theorem \ref{gammam}, we have 
 \[
 \Rprod_{k=0}^\infty \left( (k+x)^m-\eps y^m \right)=
\frac{(2\pi)^{m/2}}{ \prod_{\xi^m=1} \Gamma(x-\xi \eps^{\frac{1}{m}} y)}\tag{Lerch-Kurokawa-Wakayama's formulas}.
\]
\end{corollary}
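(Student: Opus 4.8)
The plan is to deduce this directly from Theorem~\ref{gammam} together with Lerch's formula. First I would invoke Theorem~\ref{gammam} to rewrite the left-hand side as $\prod_{\xi^m=1} L\!\left(x-\xi\eps^{\frac1m}y\right)$, so that the whole question reduces to evaluating the single function $L$ at the $m$ shifted arguments $x-\xi\eps^{\frac1m}y$ with $\xi^m=1$.

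Next I would substitute Lerch's formula in the form $L(z)=\sqrt{2\pi}/\Gamma(z)$, which is precisely \eqref{lerch}; since this is reproved independently in Appendix~\ref{appendice}, the argument stays self-contained and, unlike the Kurokawa--Wakayama approach, does not invoke the Weierstrass product \eqref{weierstrass}. Plugging this into each of the $m$ factors gives
\[
\prod_{\xi^m=1} L\!\left(x-\xi\eps^{\frac1m}y\right)=\prod_{\xi^m=1}\frac{\sqrt{2\pi}}{\Gamma\!\left(x-\xi\eps^{\frac1m}y\right)}=\frac{(2\pi)^{m/2}}{\prod_{\xi^m=1}\Gamma\!\left(x-\xi\eps^{\frac1m}y\right)},
\]
the exponent $m/2$ arising because the index set $\{\xi:\xi^m=1\}$ has exactly $m$ elements, so the $m$ copies of $\sqrt{2\pi}$ collect into $(2\pi)^{m/2}$. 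This is the asserted identity.

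There is essentially no genuine obstacle: all the content sits in Theorem~\ref{gammam} (hence in Theorem~\ref{regprod} and in \cite[Theorem~3.4]{HajliQJM}) and in the appendix's proof of Lerch's formula. The only point deserving a word of care is that $x$ and $y$ are allowed to be complex, so one should read $L$ and $1/\Gamma$ as the entire functions obtained by analytic continuation and note that the identity $L=\sqrt{2\pi}/\Gamma$ propagates to all of $\C$ by the identity theorem; once this is recorded, the substitution above is literal and completes the proof.
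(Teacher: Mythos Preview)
Your proposal is correct and is exactly the paper's approach: the corollary is stated immediately after Theorem~\ref{gammam} with only the words ``Using Lerch's formula, we obtain,'' and your argument spells out precisely that substitution $L(z)=\sqrt{2\pi}/\Gamma(z)$ in each of the $m$ factors.
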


\section{On the variation of  the regularized product  $\Rprod_{k=0}^\infty \left( (k+x)^m+ y\right)$}\label{sec3}

For $m=1,2,\ldots$, $x\geq 0$ and $y\in \C$, we set
\begin{equation}\label{asymptotic}
\theta_m(t,x;y)=\sum_{k=1}^\infty e^{-((k+x)^m+ y)t}\quad \text{for}\;t>0.
\end{equation}
The following proposition is important in our study of the variation of  \[ \Rprod_{k=0}^\infty ((k+x)^m+y)\] as a function in the variable $y$.

\begin{proposition}\label{thetam}

The function $t\mapsto \theta_m(t,x;y)  $ is smooth  on $(0,\infty)$ and has an 
asymptotic expansion for small  positive $t$ of the form
\[
\theta_m(t,x;y)\sim   \frac{1}{\Gamma(\frac{1}{m}+1)} t^{-\frac{1}{m}}+\sum_{k=0}^\infty c_k(x;y) t^k,\]
(where $c_k(x;y)$ are constants which depend on $x$ and $y$), and which decays exponentially at infinity, more precisely, we have
for $t$ sufficiently large, $
|\theta_m(t,x;y)| \leq C(y)e^{-t}$ for $t\gg 1$, where  $C(y)$ is a constant  which depends continuously on $y$). 

Moreover, for $y\geq 0$ and $x\geq 0$,
\[ \zeta_m(s,x;y):=\frac{1}{\Gamma(s)}\int_0^\infty t^{s-1}\theta_m(t,x;y)dt=\sum_{k=1}^\infty  \frac{1}{((k+x)^m+y)^s}\quad \text{for}\;\mathrm{Re}(s)>\frac{1}{m},\]
which admits a meromorphic continuation to $\C$ and has only one simple pole at
$s=\frac{1}{m}$. In particular,
\[
i(\theta_m(t,x;y))=-\frac{1}{m}.
\]

\end{proposition}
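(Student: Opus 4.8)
The plan is to reduce everything to the small-$t$ behaviour of the ``pure'' heat trace $\theta_m(t,x;0)=\sum_{k\ge 1}e^{-(k+x)^m t}$, since $\theta_m(t,x;y)=e^{-yt}\,\theta_m(t,x;0)$ and the entire factor $e^{-yt}$ only rearranges an asymptotic expansion in powers of $t$ without affecting convergence or the leading exponent. First I would dispose of the soft parts. Smoothness on $(0,\infty)$ is immediate: since $x\ge 0$ gives $(k+x)^m\ge (1+x)^m\ge 1$, the defining series and all its termwise $t$-derivatives (of the shape $\sum_k(-(k+x)^m-y)^j e^{-((k+x)^m+y)t}$) converge uniformly on each $[\delta,\infty)$, $\delta>0$. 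For the behaviour at infinity I would pull out the $k=1$ term and bound the tail by its value at $t=1$:
\[
|\theta_m(t,x;y)|\le M(x)\,e^{-((1+x)^m+\mathrm{Re}(y))\,t},\qquad M(x)=\sum_{k\ge 1}e^{-((k+x)^m-(1+x)^m)}<\infty\quad(t\ge 1),
\]
which is $O(e^{-t})$ once $\mathrm{Re}(y)\ge 0$ (in particular for the real $y\ge 0$ used below) since $(1+x)^m\ge 1$; this is $(\Theta 3)$, with $C(y)$ continuous in $y$.

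For the small-$t$ expansion I would use a Mellin transform. For $\mathrm{Re}(s)>1/m$ one has, by termwise integration, $\int_0^\infty t^{s-1}\theta_m(t,x;0)\,dt=\Gamma(s)\sum_{k\ge 1}(k+x)^{-ms}=\Gamma(s)\,\zeta_H(ms,1+x)$, where $\zeta_H(w,a)=\sum_{n\ge 0}(n+a)^{-w}$ is the Hurwitz zeta function. Mellin inversion gives $\theta_m(t,x;0)=\frac{1}{2\pi i}\int_{c-i\infty}^{c+i\infty}\Gamma(s)\,\zeta_H(ms,1+x)\,t^{-s}\,ds$ for $c>1/m$; shifting the contour to $\mathrm{Re}(s)=-N-\tfrac12$ and collecting residues yields
\[
\theta_m(t,x;0)=\Gamma\!\left(\tfrac1m+1\right)t^{-1/m}+\sum_{n=0}^{N}\frac{(-1)^n}{n!}\,\zeta_H(-mn,1+x)\,t^{n}+O\!\left(t^{N+1/2}\right),
\]
the first term coming from the simple pole of $\zeta_H(ms,1+x)$ at $s=1/m$ (residue $\tfrac1m$, since $\mathrm{Res}_{w=1}\zeta_H(w,a)=1$ and $\tfrac1m\Gamma(\tfrac1m)=\Gamma(\tfrac1m+1)$ --- consistent with the value $\tfrac{\sqrt\pi}{2}=\Gamma(\tfrac32)$ for the coefficient of $t^{-1/2}$ in the case $m=2$ noted in the introduction), and the sum from the poles of $\Gamma$ at $s=0,-1,\dots,-N$, with $\zeta_H(-mn,1+x)$ a polynomial in $x$. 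Multiplying by $e^{-yt}=\sum_{i\ge 0}\tfrac{(-y)^i}{i!}t^i$ and regrouping produces an asymptotic expansion of $\theta_m(t,x;y)$ in powers of $t$ whose smallest exponent is $-1/m$, with nonzero coefficient and coefficients depending polynomially on $x$ and $y$; this is $(\Theta 2)$, and in particular $i(\theta_m(t,x;y))=-1/m$.

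With $(\Theta 1)$--$(\Theta 3)$ in hand, the general framework of the previous section applies to show that $\zeta_m(s,x;y)=\frac{1}{\Gamma(s)}\int_0^\infty t^{s-1}\theta_m(t,x;y)\,dt$ --- equal to $\sum_{k\ge 1}((k+x)^m+y)^{-s}$ for $\mathrm{Re}(s)>1/m$ by termwise integration --- continues meromorphically to $\C$ and is holomorphic at $s=0$. To locate the pole I would split $\int_0^\infty=\int_0^1+\int_1^\infty$: the tail is entire by $(\Theta 3)$, while on $[0,1]$ a term $a_\alpha t^\alpha$ of the small-$t$ expansion contributes a simple pole of $\int_0^1 t^{s-1}\theta_m\,dt$ at $s=-\alpha$, which survives the division by $\Gamma(s)$ only when $-\alpha\notin\Z_{\le 0}$. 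The exponent $\alpha=-1/m$ then yields the simple pole at $s=1/m$ with residue $\Gamma(\tfrac1m+1)/\Gamma(\tfrac1m)=\tfrac1m$, the integer exponents $t^k$ being annihilated by the zeros of $1/\Gamma$; so $s=1/m$ is the pole in $\mathrm{Re}(s)>0$ and $\zeta_m$ is regular at $s=0$.

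The one genuinely delicate point is the contour-shift estimate: for the displayed expansion to be truly asymptotic I need that $\zeta_H(\sigma+i\tau,1+x)$ grows at most polynomially in $|\tau|$ on vertical lines (via the Hurwitz functional equation, or a convexity bound), so that against the exponential decay of $\Gamma(\sigma+i\tau)$ the shifted integral is $O(t^{N+1/2})$. A route that sidesteps these estimates is to compare $\theta_m(t,x;0)$ with $\int_0^\infty e^{-(u+x)^m t}\,du=\Gamma(\tfrac1m+1)\,t^{-1/m}+(\text{an entire function of }t)$ by Euler--Maclaurin, which gives the same leading term together with explicit integer-power corrections and a remainder one bounds directly.
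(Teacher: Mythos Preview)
Your argument is correct and matches the paper's approach closely: both factor $\theta_m(t,x;y)=e^{-yt}\theta_m(t,x;0)$, recover the small-$t$ expansion of $\theta_m(t,x;0)$ by Mellin inversion of $\Gamma(s)\zeta_H(ms,1+x)$ (invoking polynomial growth of $\zeta_H$ on vertical lines against the decay of $\Gamma$), bound the large-$t$ tail by a direct estimate, and then read off the meromorphic continuation and pole structure of $\zeta_m$ from the general $(\Theta1)$--$(\Theta3)$ framework. Your residue and pole bookkeeping is more explicit than the paper's, and the Euler--Maclaurin alternative you sketch at the end is an extra route the paper does not mention.
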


\begin{proof}

It is clear that $t\mapsto \theta_m(t,x;y)$ is a smooth function.  

Let $\sigma_1<\sigma_2$ be two real numbers. On the closed strip $\sigma_1\leq \mathrm{Re}(s)\leq \sigma_2$, we have for any $r>0$,
\[
\Gamma(s)=O(|s|^{-r}),
\]
as $|s|\rightarrow \infty$ in the strip. This decay follows from the complex version of Stirling's formula  (see \cite[p. 257]{Table2}). On the other hand, it is known that for $\sigma<1$, one has 
\[
\zeta_H(\sigma+it,x)=O(|t|^{1-\sigma})\quad \text{for\;} t\in \R.
\] 
Since  $\theta_m(t,x;0)$ is the inverse Mellin  transform  of 
$\Gamma(s)\zeta_H(ms,,x+1)$, and knowing that 
$\zeta_H(ms,x+1)$ can be continued analytically to $\mathrm{Re}(s)<1/m$, we can use the Mellin inversion theorem to show that $\theta_m(t,x;0)$ has an asymptotic expansion for $t$ small enough, 
\[
\theta_m(t,x;0)\sim   \frac{1}{\Gamma(\frac{1}{m}+1)} t^{-\frac{1}{m}}+\sum_{k=0}^\infty c_k(x) t^k\quad \text{for}\; 0<t\ll 1,
\]
where the dominant term corresponds to the unique pole of 
$\zeta_H(ms,x+1)$. In fact,  the coefficients of the asymptotic expansion can be determined  in terms of the special values of $\zeta_H(ms,x+1)$ 
at the non-negative integers (see \cite[Proposition 2.1]{HajliQJM}). Since 
$\theta_m(t,x;y)=\theta_m(t,x;0)e^{-yt}$, we  can deduce the asymptotic expansion \eqref{asymptotic}. That is  the existence of a sequence of constants $(c_k(x;y))_{k\in \N}$ such that
\begin{equation}\label{ckxy}
\sum_{k=1}^\infty e^{-((k+x)^m+y)t}\sim   \frac{1}{\Gamma(\frac{1}{m}+1)} t^{-\frac{1}{m}}+\sum_{k=0}^\infty c_k(x;y) t^k.
\end{equation}

We have, for $x\geq 0$ and $y\in \C$
\[
\left|\sum_{k=1}^\infty e^{-(k+x)^mt-yt} \right|\leq  \sum_{k=1}^\infty e^{-kt} e^{-\mathrm{Re}(y)t}\quad\forall t>0.
\]
So, we deduce the existence of  a constant $C(y)$ which depends continuously on $y$ such that
\begin{equation}\label{cy}
\left|\sum_{k=1}^\infty e^{-(k+x)^mt-yt} \right|\leq  C(y)e^{-t}\quad\forall t\geq 1.
\end{equation}

 
From \eqref{ckxy} and  \eqref{cy}, and using \cite[Proposition 2.1]{HajliQJM}, we conclude that  $\zeta_m(s,x;y)$
admits a meromorphic continuation to $\C$ with only one  pole at
$s=\frac{1}{m}$.  The asymptotic expansion shows that this pole is simple.

\end{proof}

\begin{theorem}\label{general}
Let $x,y >0$. For any $m=2,3,\ldots,$ we have  

\[
\sum_{\xi^m=1} \Gamma(x-\xi (-1)^{\frac{1}{m}} y^{\frac{1}{m}} +1)\frac{\pt}{\pt y}\left(\frac{1}{ \prod_{\xi^m=1} \Gamma(x-\xi (-1)^{\frac{1}{m}} y^{\frac{1}{m}} +1)}\right)=\sum_{k=1}^\infty \frac{1}{(k+x)^m +y},
\]
where $(-1)^{\frac{1}{m}}$ denotes an $m$-th root of $-1$.
\end{theorem}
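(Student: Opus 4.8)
The plan is to reduce the asserted identity to a logarithmic derivative in $y$ of the closed form for $\Rprod_{k=1}^{\infty}\bigl((k+x)^{m}+y\bigr)$ coming from Theorem~\ref{gammam}. First I would apply that theorem with $\eps=-1$ and with $y$ replaced by $y^{1/m}$ (so that $-\eps\,(y^{1/m})^{m}=y$), getting $\Rprod_{k=0}^{\infty}\bigl((k+x)^{m}+y\bigr)=\prod_{\xi^{m}=1}L\bigl(x-\xi(-1)^{1/m}y^{1/m}\bigr)$; dividing out the $k=0$ term $x^{m}+y=\prod_{\xi^{m}=1}\bigl(x-\xi(-1)^{1/m}y^{1/m}\bigr)$ and using $z\Gamma(z)=\Gamma(z+1)$ together with Lerch's formula $L(z)=\sqrt{2\pi}/\Gamma(z)$, this yields
\[
\Rprod_{k=1}^{\infty}\bigl((k+x)^{m}+y\bigr)=\frac{(2\pi)^{m/2}}{P(y)},\qquad P(y):=\prod_{\xi^{m}=1}\Gamma\bigl(x-\xi(-1)^{1/m}y^{1/m}+1\bigr).
\]
Since $P(y)\,\partial_{y}\bigl(1/P(y)\bigr)=-\partial_{y}\log P(y)=\partial_{y}\log\Rprod_{k=1}^{\infty}\bigl((k+x)^{m}+y\bigr)$, the left-hand side of the theorem is precisely $\partial_{y}\log\Rprod_{k=1}^{\infty}\bigl((k+x)^{m}+y\bigr)$, so the problem reduces to showing that this logarithmic derivative equals $\sum_{k=1}^{\infty}\bigl((k+x)^{m}+y\bigr)^{-1}$.

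For that I would start from $\log\Rprod_{k=1}^{\infty}\bigl((k+x)^{m}+y\bigr)=-\partial_{s}\zeta_{m}(s,x;y)\big|_{s=0}$, with $\zeta_{m}$ the zeta function of Proposition~\ref{thetam} (holomorphic at $s=0$), and use the relation $\partial_{y}\zeta_{m}(s,x;y)=-s\,\zeta_{m}(s+1,x;y)$, obtained by differentiating the Dirichlet series term by term for $\mathrm{Re}(s)>1/m$. The crucial point is that for $m\geq2$ the only pole of $s\mapsto\zeta_{m}(s+1,x;y)$, at $s=\tfrac1m-1<0$, avoids a neighbourhood of $s=0$, so this relation extends to $s=0$ by analytic continuation. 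Differentiating it in $s$ and setting $s=0$ gives $\partial_{s}\partial_{y}\zeta_{m}(s,x;y)\big|_{s=0}=-\zeta_{m}(1,x;y)=-\sum_{k=1}^{\infty}\bigl((k+x)^{m}+y\bigr)^{-1}$; interchanging the $s$- and $y$-derivatives then yields $\partial_{y}\log\Rprod_{k=1}^{\infty}\bigl((k+x)^{m}+y\bigr)=\sum_{k=1}^{\infty}\bigl((k+x)^{m}+y\bigr)^{-1}$, which is what was needed.

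I expect the main obstacle to be the analytic bookkeeping around $s=0$: justifying $\partial_{s}\partial_{y}=\partial_{y}\partial_{s}$ there, and that the $s$-analytic continuation of $\partial_{y}\zeta_{m}$ agrees with $\partial_{y}$ of the $s$-analytic continuation of $\zeta_{m}$. I would handle this via the joint analyticity of $\zeta_{m}$ in $(s,y)$: starting from $\zeta_{m}(s,x;y)=\tfrac1{\Gamma(s)}\int_{0}^{\infty}t^{s-1}\theta_{m}(t,x;0)e^{-yt}\,dt$ and the decomposition used in the proof of Proposition~\ref{thetam} --- split $\int_{0}^{1}+\int_{1}^{\infty}$ and subtract the $t^{-1/m}$ and constant terms of the small-$t$ expansion, which exposes the poles at $s=1/m$ and $s=0$ --- one checks that $\zeta_{m}$ is holomorphic in $(s,y)$ jointly on a neighbourhood of $\{0\}\times\{\mathrm{Re}(y)>0\}$, and that $\partial_{y}$ passes under the integral sign because $\partial_{y}\theta_{m}(t,x;y)=-t\,\theta_{m}(t,x;y)$ keeps the needed decay as $t\to0^{+}$ (after the subtraction) and as $t\to\infty$; all the interchanges above then follow. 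A short preliminary check is also needed that for $x,y>0$ the number $x-\xi(-1)^{1/m}y^{1/m}+1$ is never a pole of $\Gamma$, so that $P(y)$, and hence the left-hand side, makes sense: the $m$-th roots of $-y$ are the numbers $y^{1/m}e^{i\pi(2j+1)/m}$, none of which is a positive real, while a root with nonpositive real part forces $\mathrm{Re}\bigl(x-\xi(-1)^{1/m}y^{1/m}+1\bigr)>x+1>1$.
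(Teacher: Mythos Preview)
Your proposal is correct and follows essentially the same route as the paper: reduce via Theorem~\ref{gammam} and Lerch's formula to the identity $\partial_y\log\Rprod_{k\geq1}\bigl((k+x)^m+y\bigr)=\zeta_m(1,x;y)$, then establish that identity by computing $\partial_y\zeta_m(s,x;y)=-s\,\zeta_m(s+1,x;y)$, invoking the joint holomorphy of $\zeta_m$ near $s=0$ (from the Mellin decomposition of Proposition~\ref{thetam}) to justify the interchange $\partial_s\partial_y=\partial_y\partial_s$, and finally using that $\zeta_m$ is regular at $s=1$ for $m\geq2$. The only cosmetic difference is that the paper reaches $\partial_y\zeta_m=-s\,\zeta_m(\cdot+1)$ through the integral representation and an integration by parts (introducing the auxiliary $\Theta_m(t,x;y)=\int_t^\infty\theta_m$), whereas you obtain it directly by term-by-term differentiation of the Dirichlet series and analytic continuation; your added checks (removal of the $k=0$ term to produce the shift $+1$ in the $\Gamma$-arguments, and that these arguments avoid the poles of $\Gamma$) are welcome details the paper leaves implicit.
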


\begin{proof}

By Theorem \ref{gammam}, it is enough to prove the following identity
\begin{equation}\label{der}
\frac{\pt}{\pt y} \log \Rprod_{k=1}^\infty  \left( (k^\ell+x)^m + y\right)=\sum_{k=1}^\infty \frac{1}{((k+x)^m + y)},
\end{equation}
for $x>0$ and $y\in\C$ sufficiently small,  and to notice that the left hand side of this equality is 
$-\frac{\pt \zeta_m}{\pt s}(s,x;-\eps y)_{|_{s=0}}$ (see the notations in Proposition \ref{thetam}). \\

  We have
\[
\begin{split}
\frac{\pt}{\pt y}\zeta_m(s,x;y)=&-\frac{1}{\Gamma(s)}\int_0^\infty t^s \theta_m(t,x;y)dt\\
=&\frac{s}{\Gamma(s)}\int_0^\infty
t^{s-1}\Theta_m(t,x;y)dt\quad \forall y\in \C,
\end{split}
\]
where $
\Theta_m(t,x;y):=\int_t^\infty \sum_{k=1}^\infty e^{-( (k+x)^m+y)u}du$, which decays exponentially as $t\rightarrow \infty$.\\

We set \[
g(t)=\theta_m(t,x;y)-\Gamma(1+\frac{1}{m})\frac{1}{t^{\frac{1}{m}}}\quad \forall t>0.
\] By Proposition \ref{thetam}, $g$  is  a bounded function for $t$ small enough.  Using the asymptotic expansion of
$\theta_m(t,x;y),$ we  have
\[
\begin{split}
\zeta_m(s,x;y)=&\frac{1}{\Gamma(s)} \Gamma(1+\frac{1}{m}) \int_0^1 t^{s+\frac{1}{m}-1}dt +\frac{1}{\Gamma(s)} \int_0^1 t^{s-1} g(t) dt \\
&
 +\frac{1}{\Gamma(s)} \int_1^\infty t^{s-1}\theta(t) dt.
\end{split}
\]
It follows that $\zeta_m(s,x;y)$ can be continued into a holomorphic function in the variables  $s$ and $y$, for any $s$ in an open neighborhood of $0$  and $y$ in any  bounded subset of $\C$. \\

In the sequel, we shall use the same notation $\zeta_m(s,x;y)$ to denote its holomorphic 
continuation. We have
\[
\frac{\pt^2}{\pt y \pt s} \zeta_m(s,x;y)=\frac{\pt^2}{\pt s \pt y} \zeta_m(s,x;y),
\]
which holds on an open neighborhood of $s=0$. 

We obtain

\[
\frac{\pt^2 \zeta_m}{\pt y\pt s} \zeta_m(0,x;y)=\frac{\pt}{\pt s} \left(\frac{s}{\Gamma(s)}\int_0^\infty
t^{s-1}\Theta_m(t,x;y)dt\right)_{|_{s=0}}. \]

But,
\[
\Theta_m(t,x,;y)=\sum_{k=1}^\infty  \frac{1}{((k+x)^m+y)}e^{-((k+x)^m+y)t}\quad \text{for}\; t>0,
\]
So,
\[
\begin{split}
\frac{\pt}{\pt s} \left(\frac{s}{\Gamma(s)}\int_0^\infty
t^{s-1}\Theta_m(t,x;y)dt\right)_{|_{s=0}}=& \frac{\pt}{\pt s}\left(s\sum_{k=1}^\infty \frac{1}{((k+x)^m+y)^{s+1}} \right)_{|_{s=0}} \\
=& \frac{\pt}{\pt s} (s\zeta_m(s+1,x;y))_{|_{s=0}}. 
\end{split}
\]

Since 
 $\zeta_m(s,x;y)$ is holomorphic at
$s=1$ (use Proposition \ref{thetam}). Then \[
\frac{\pt}{\pt s} (s\zeta_m(s+1,x;y))_{|_{s=0}}=\zeta_m(1,x;y).\] 
Hence
\[
\frac{\pt^2 \zeta_m}{\pt y\pt s} (0;x,y)=\zeta_m(1,x;y). 
\]
Therefore,
\[
\frac{\pt}{\pt y} \exp\left(-\frac{\pt}{\pt s} \zeta_m(s,x;y)_{|_{s=0}}\right)=
-\frac{\pt}{\pt s} \zeta_m(s,x;y)_{|_{s=0}} \zeta_m(1,x;y).
\]
This concludes the proof of the theorem.

\end{proof}


\section{Some applications}

We have
\[
 \Rprod_{k=1}^\infty  (k^4+y)=2y^{-\frac{1}{2}} (\cosh(\sqrt{2}\pi y^{\frac{1}{4}})-\cos(\sqrt{2}\pi y^{\frac{1}{4}}))\quad \forall y>0,
\]
(see  \cite[p. 943]{Kurokawa1}) which can be proved using  the reflection formula for the Gamma function.

\begin{theorem}\label{exampleseries}
We have
\begin{enumerate}
\item

\[
\frac{1}{2\pi}\sum_{k\in \Z} \frac{2y}{k^2+y^2}=\coth(\pi y),
\]

\item

\[
\frac{1 }{\pi \sqrt{2}}\sum_{k\in \Z}^\infty\frac{4y^3}{k^4+y^4}=\frac{\sinh(\sqrt{2} \pi y)+\sin(\sqrt{2} \pi y) }{\cosh(\sqrt{2} \pi y)-\cos(\sqrt{2} \pi y)},
\]

\item For any $n\geq 3$,
\[
\frac{n}{ \pi} \sum_{k\in \Z}\frac{y^{2n-1}}{k^{2n}+y^{2n}}=\sum_{l=0}^{n-1} e^{\frac{\pi i l}{n}} \tan(\pi e^{\frac{\pi i l}{k}}y),
\]

\end{enumerate}
for any  $y\in \R\setminus\{0\}$.
\end{theorem}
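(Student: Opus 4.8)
The idea is that each of the three identities is obtained by differentiating, in the additive parameter, the logarithm of a closed-form regularized product furnished by Theorem~\ref{gammam}, recognizing the resulting logarithmic derivative as a Dirichlet series $\sum_{k\ge1}$ by means of Theorem~\ref{general}, and then passing to $\sum_{k\in\Z}$ via the elementary identity $\sum_{k\in\Z}\tfrac{1}{k^{2n}+y^{2n}}=\tfrac{1}{y^{2n}}+2\sum_{k\ge1}\tfrac{1}{k^{2n}+y^{2n}}$ (the summand being even in $k$). The one external input is the reflection formula $\Gamma(z)\Gamma(1-z)=\pi/\sin(\pi z)$, in the equivalent digamma form $\Psi(1-z)-\Psi(1+z)=\pi\cot(\pi z)-\tfrac1z$, together with $\sin(i\pi y)=i\sinh(\pi y)$ and $\cos(i\pi y)=\cosh(\pi y)$. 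Note also that removing a single finite factor $b_0>0$ from a regularized product divides its value by $b_0$, so each formula of Theorem~\ref{gammam} for $\Rprod_{k=0}^\infty$ yields the corresponding $\Rprod_{k=1}^\infty$ to which Theorem~\ref{general} directly applies.

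For (1) one uses $m=2$, $x=0$, $\eps=-1$ in Theorem~\ref{gammam}, giving $\Rprod_{k=0}^\infty(k^2+y^2)=L(iy)L(-iy)=2\pi/(\Gamma(iy)\Gamma(-iy))$; the reflection formula (with $\Gamma(1-z)=-z\Gamma(-z)$) turns this into $\Rprod_{k=0}^\infty(k^2+y^2)=2y\sinh(\pi y)$, hence $\Rprod_{k=1}^\infty(k^2+y^2)=2\sinh(\pi y)/y$. Differentiating its logarithm in $Y=y^2$ and invoking Theorem~\ref{general} gives $\sum_{k\ge1}(k^2+y^2)^{-1}=\tfrac{\pi}{2y}\coth(\pi y)-\tfrac1{2y^2}$, so that $\sum_{k\in\Z}\tfrac{2y}{k^2+y^2}=\tfrac2y+4y\bigl(\tfrac{\pi}{2y}\coth(\pi y)-\tfrac1{2y^2}\bigr)=2\pi\coth(\pi y)$, which is (1) after division by $2\pi$. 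For (2) one instead starts from the closed form recalled at the beginning of this section, $\Rprod_{k=1}^\infty(k^4+y^4)=\tfrac2{y^2}(\cosh(\sqrt2\,\pi y)-\cos(\sqrt2\,\pi y))$ (itself a consequence of the reflection formula), differentiates its logarithm in $Y=y^4$, applies Theorem~\ref{general} to read off $\sum_{k\ge1}(k^4+y^4)^{-1}$, multiplies by $4y^3$, and adds the $k=0$ term; the elementary summands cancel and one is left, up to the normalizing constant in front, with the stated hyperbolic/trigonometric quotient.

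For (3), apply Theorem~\ref{general} (equivalently the displayed Corollary, eq.~\eqref{genEuler}) with $m=2n$, $x=0$ and parameter $y^{2n}$. Since $(y^{2n})^{1/(2n)}=y$ and $(y^{2n})^{1/(2n)-1}=y^{1-2n}$, multiplying through by $y^{2n-1}$ reduces the right-hand side to $\tfrac1{2n}\sum_{\omega^{2n}=-1}\omega\,\Psi(1-\omega y)$, the sum over the $2n$ roots $\omega$ of $-1$. These split into $n$ pairs $\{\omega,-\omega\}$, and for each pair $\omega\bigl(\Psi(1-\omega y)-\Psi(1+\omega y)\bigr)=\pi\omega\cot(\pi\omega y)-\tfrac1y$ by the digamma reflection formula, while $\omega\cot(\pi\omega y)$ is unchanged under $\omega\mapsto-\omega$; summing over the $n$ pairs with representatives $\omega_l=e^{\pi i(2l+1)/(2n)}$, $l=0,\dots,n-1$, gives $\sum_{k\ge1}\tfrac{y^{2n-1}}{k^{2n}+y^{2n}}=\tfrac1{2n}\bigl(\pi\sum_{l=0}^{n-1}\omega_l\cot(\pi\omega_l y)-\tfrac ny\bigr)$. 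Finally $\sum_{k\in\Z}\tfrac{y^{2n-1}}{k^{2n}+y^{2n}}=\tfrac1y+2\sum_{k\ge1}\tfrac{y^{2n-1}}{k^{2n}+y^{2n}}=\tfrac{\pi}{n}\sum_{l=0}^{n-1}\omega_l\cot(\pi\omega_l y)$, the $n/y$ correction being absorbed by the $k=0$ term, and multiplication by $n/\pi$ gives the asserted identity (the $\omega_l$ being one representative from each $\pm$-orbit of $2n$-th roots of $-1$).

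The step I expect to be the main obstacle is the root bookkeeping in (3): fixing compatible branches of $(-1)^{1/(2n)}$ and $y^{1/(2n)}$ so that Theorem~\ref{general} applies (it is stated for $x,y>0$, so one works at $x=0$ with parameter $y^{2n}>0$, using continuity in $x$), enumerating the $2n$-th roots of $-1$ and forming the $n$ conjugate pairs without double counting, and checking that the elementary remainders produced by the digamma recurrence cancel the $k=0$ contributions exactly. For (2) the subsidiary point is tracking the powers of $y$ under the substitution $Y=y^4$ in the Kurokawa--Wakayama closed form (or, alternatively, running the $n=2$ case of the argument in (3) and matching the resulting cotangents of complex arguments with the $\sinh,\sin$ expression).
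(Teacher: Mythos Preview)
Your proposal is correct and follows essentially the paper's approach: obtain a closed form for the relevant regularized product via Theorem~\ref{gammam} and the reflection formula, then identify its logarithmic derivative with the Dirichlet series $\sum_{k\ge1}$ using \eqref{der}/Theorem~\ref{general}, and symmetrize to $\sum_{k\in\Z}$. Parts (1) and (2) are handled identically to the paper.

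For part (3) there is a small but genuine difference in the order of operations. The paper first applies the $\Gamma$--reflection formula to collapse $\prod_{\xi^{2n}=-1}\Gamma(\xi z)^{-1}$ into an explicit product of sines, and only then takes the logarithmic derivative to obtain the sum of cotangents. You instead invoke the digamma form \eqref{genEuler} directly and apply the $\Psi$--reflection identity $\Psi(1-z)-\Psi(1+z)=\pi\cot(\pi z)-1/z$ to each $\{\omega,-\omega\}$ pair. Since the $\Psi$--reflection formula is precisely the logarithmic derivative of the $\Gamma$--reflection formula, the two routes are equivalent; yours has the mild advantage of bypassing the intermediate product-of-sines closed form and making the cancellation of the $n/y$ term against the $k=0$ contribution more transparent, at the cost of the root-pairing bookkeeping you yourself flag. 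Note that your representatives $\omega_l=e^{\pi i(2l+1)/(2n)}$ differ from the paper's $e^{\pi i l/n}$ by the fixed phase $e^{\pi i/(2n)}$, which is exactly the substitution the paper performs in its last line; and the result should be stated with $\cot$ (as in \eqref{y2nk} and the paper's own proof) rather than $\tan$.
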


\begin{proof}
\item
\begin{enumerate}
\item   For any $y>0$, we have
\[
 \Rprod_{k=0}^\infty (k^2+y)= 2y^{\frac{1}{2}} \sinh(\pi y^{\frac{1}{2}}).
\]
(this is identity is a direct consequence of  (3) of Theorem \ref{lerchthm}).  So, from 
\eqref{der}, we get the following identity
\begin{equation}\label{seriesk2y2}
\sum_{k=1}^\infty \frac{1}{k^2+y}=-\frac{1}{2y}+\frac{\pi }{2 y^{\frac{1}{2}}} \coth(\pi y^{\frac{1}{2}}). 
\end{equation}


That is
\[
\frac{1}{2\pi }\sum_{k\in \Z}^\infty \frac{2y}{k^2+y^2}=\coth(\pi y). 
\]

\item

From \cite{Kurokawa1}, we know that
\[
\frac{(2\pi)^2}{\underset{\zeta^4=1}\prod \Gamma(-\zeta \frac{1+i}{\sqrt{2}}y )}=2y^2(\cosh(\sqrt{2} \pi y)-\cos(\sqrt{2} \pi y)),
\]
(The authors uses the reflection formula  to prove this equation. See \ref{reflection} for a proof of this formula).  
Using again \eqref{der}, we get for any $y>0$,
\[
\left( y^{-\frac{1}{2}} \bigl(\cosh(\sqrt{2}\pi y^{\frac{1}{4}})-\cos(\sqrt{2}\pi y^{\frac{1}{4}})\bigr)\right)^{-1} \frac{d}{dy} \left(y^{-\frac{1}{2}} \bigl(\cosh(\sqrt{2}\pi y^{\frac{1}{4}})-\cos(\sqrt{2}\pi y^{\frac{1}{4}})\bigr)\right)=\sum_{k=1}^\infty\frac{1}{k^4+y}.
\]
That is,
\[ \sum_{k=1}^\infty\frac{1}{k^4+y}=-\frac{1}{2y}+\frac{\sqrt{2}\pi}{4y^{\frac{3}{4}} } \left( \frac{\sinh(\sqrt{2} \pi y^{\frac{1}{4}})+\sin(\sqrt{2} \pi y^{\frac{1}{4}}) }{\cosh(\sqrt{2} \pi y^{\frac{1}{4}})-\cos(\sqrt{2} \pi y^{\frac{1}{4}})} \right).\]

Then
\[
\frac{4}{\pi \sqrt{2}}\sum_{k\in \Z}\frac{y^3}{k^4+y^4}=\left( \frac{\sinh(\sqrt{2} \pi y)+\sin(\sqrt{2} \pi y) }{\cosh(\sqrt{2} \pi y)-\cos(\sqrt{2} \pi y)} \right).
\]
\item 

Using the reflection formula, we obtain
\[
\frac{(2\pi)^{2n}}{\prod_{\xi^{2n}=-1}\Gamma(\xi z)}=(-1)^k 2^k z^k e^{\frac{\pi i}{2}n(n-1)} \prod_{l=0}^{n-1} \sin(\pi e^{\frac{\pi il}{n} } e^{\frac{\pi i}{2k}}z).
\]
Then,
\[
\Rprod_{k=1}^\infty(k^{2n}+z^{2k})=(-1)^k 2^k z^{-k} e^{\frac{\pi i}{2}n(n-1)} \prod_{l=0}^{n-1} \sin(\pi e^{\frac{\pi il}{n} } e^{\frac{\pi i}{2k}}z).
\]
Taking the logarithmic derivative, we get

\[
\frac{n e^{-\frac{\pi i}{2n}}}{ \pi} \sum_{k\in \Z}\frac{z^{2n-1}}{k^{2n}+z^{2n}}=\sum_{l=0}^{n-1} e^{\frac{\pi i l}{n}} \cot(\pi e^{\frac{\pi i l}{k}} e^{\frac{\pi i}{2n}}z).
\]
By letting $y=e^{\frac{\pi i}{2n}}z$, we obtain the desired identity.
\end{enumerate}
\end{proof}


\begin{corollary}
For any $j=1,2,\ldots$, 
\[
\zeta_\Q(2j)=\frac{(-1)^{j+1} 2^{2j-1}}{(2j)!} \pi^{2j}B_{2j},
\]
where $B_{2j}$ is the $2j$-th Bernoulli number.
\end{corollary}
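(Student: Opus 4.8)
The plan is to read off Euler's formula by comparing Taylor coefficients at $y=0$ on the two sides of identity \eqref{seriesk2y2}, namely
\[
\sum_{k=1}^\infty \frac{1}{k^2+y}=-\frac{1}{2y}+\frac{\pi }{2 y^{1/2}} \coth(\pi y^{1/2}).
\]
Although this was derived for $y>0$, both sides are restrictions of holomorphic functions of $y$ on the disc $|y|<1$: the left-hand side is meromorphic with poles only at $y=-k^2$ ($k\geq 1$), and on the right-hand side the apparent branch point at $y=0$ is spurious, since $z\coth z$ is even in $z$ and only the combination $(y^{1/2})^{2n}=y^n$ occurs. So the first step is to note that \eqref{seriesk2y2} is an identity of functions holomorphic on $|y|<1$, hence an identity of power series there.

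Next I would expand the left-hand side. For $|y|<1$ one has $|y/k^2|<1$ for every $k\geq 1$, so by absolute convergence
\[
\sum_{k=1}^\infty \frac{1}{k^2+y}=\sum_{k=1}^\infty \frac{1}{k^2}\sum_{n=0}^\infty \Bigl(-\frac{y}{k^2}\Bigr)^{n}=\sum_{n=0}^\infty (-1)^n \,\zeta_\Q(2n+2)\, y^{n}.
\]
For the right-hand side I would invoke the classical Laurent expansion $z\coth z=1+\sum_{n\geq 1}\frac{2^{2n}B_{2n}}{(2n)!}z^{2n}$, valid for $|z|<\pi$, and substitute $z=\pi y^{1/2}$, so that $z^{2n}=\pi^{2n}y^{n}$ and the condition $|z|<\pi$ is exactly $|y|<1$. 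This gives
\[
\frac{\pi}{2y^{1/2}}\coth(\pi y^{1/2})=\frac{1}{2y}\Bigl(1+\sum_{n=1}^\infty \frac{2^{2n}B_{2n}\pi^{2n}}{(2n)!}\,y^{n}\Bigr),
\]
so the singular term cancels against $-\tfrac{1}{2y}$ and
\[
-\frac{1}{2y}+\frac{\pi }{2 y^{1/2}} \coth(\pi y^{1/2})=\sum_{n=1}^\infty \frac{2^{2n-1}B_{2n}\pi^{2n}}{(2n)!}\,y^{n-1}=\sum_{n=0}^\infty \frac{2^{2n+1}B_{2n+2}\pi^{2n+2}}{(2n+2)!}\,y^{n}.
\]

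Finally, equating the coefficients of $y^{j-1}$ in the two power series yields
\[
(-1)^{j-1}\zeta_\Q(2j)=\frac{2^{2j-1}B_{2j}\pi^{2j}}{(2j)!},
\]
i.e. $\zeta_\Q(2j)=\dfrac{(-1)^{j+1}2^{2j-1}}{(2j)!}\pi^{2j}B_{2j}$, which is the asserted identity. There is essentially no obstacle here: the argument is a coefficient comparison between two convergent power series. The only point deserving a word of care is precisely the first step — legitimising \eqref{seriesk2y2} as an identity of analytic functions near $y=0$ rather than merely for real $y>0$, and observing that the radius $\pi$ of convergence of the $\coth$ expansion matches exactly the radius $1$ forced by the poles of the left-hand side, so that both Taylor expansions are simultaneously valid. (Alternatively, one may obtain the same formula from the Laurent expansion at $y=0$ of identity \eqref{y2nk}, as indicated after that identity, but the route through \eqref{seriesk2y2} is the most direct.)
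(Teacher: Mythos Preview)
Your proof is correct and follows essentially the same route as the paper: expand both sides of \eqref{seriesk2y2} as power series in $y$ near $0$ (using the Bernoulli expansion of $z\coth z$ on the right and the geometric series on the left) and compare coefficients. Your version is more detailed in justifying the analyticity near $y=0$ and in carrying out the coefficient comparison, but the argument is the same one the paper sketches.
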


\begin{proof} Recall that
\[
\begin{split}
\frac{1}{2y}+\frac{\pi}{2y^{\frac{1}{2}}}\coth(\pi y^{\frac{1}{2}})=\sum_{j=1}^\infty \frac{B_{2j}}{(2j)!} (2\pi)^{2j} y^j,
\end{split}
\]
where $B_{2j}$ is the $2j$-th Bernoulli number. From \eqref{seriesk2y2}, it is easy to see that
\[
\sum_{k=1}^\infty \frac{1}{k^{2j}}= \frac{(-1)^{j+1} 2^{2j-1}}{(2j)!} \pi^{2j}B_{2j}.
\]
\end{proof}

\begin{corollary}\label{cor1.2}

We have
\begin{enumerate}[label=(\roman*)]

\item 
\begin{equation}\label{k2y}
\sum_{k\in \Z} \frac{1}{k^2+y^2}=\frac{\pi}{y} \left( \frac{e^{2\pi y} +1}{e^{2\pi y}-1}\right),
\end{equation}

\item 
\begin{equation}\label{Euler}
\zeta_{\mathbb{Q}}(2k)=\sum_{\ell=1}^\infty \frac{1}{\ell^{2k}}=(-1)^{k-1} \frac{(2\pi)^{2k} B_{2k}}{2(2k)! }\tag{Euler},
\end{equation}

\item
$$\sum_{k\in \Z}^\infty\frac{1}{k^4+y^4}= \frac{\pi \sqrt{2}}{4y^3} \left(\frac{\sinh(\sqrt{2} \pi y)+\sin(\sqrt{2} \pi y) }{\cosh(\sqrt{2} \pi y)-\cos(\sqrt{2} \pi y)}\right). $$

\item
$$\sum_{k=0 }^\infty \frac{1}{(k+x)^m}=\zeta_H(m,x)=(-1)^{m-1} \frac{ \Psi^{(m-1)}(x) }{(m-1)!},$$
where $\zeta_H$ is Hurwitz's zeta function.


\end{enumerate}

\end{corollary}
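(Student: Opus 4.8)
The plan is to split the four assertions into two groups: items~(i)--(iii) are immediate rewritings of Theorem~\ref{exampleseries}, while (ii) and (iv) need only elementary expansions of classical series. I would dispose of (i) and (iii) first. Theorem~\ref{exampleseries}(1) says $\frac{1}{2\pi}\sum_{k\in\Z}\frac{2y}{k^2+y^2}=\coth(\pi y)$, so multiplying by $\pi/y$ and writing $\coth(\pi y)=\frac{e^{\pi y}+e^{-\pi y}}{e^{\pi y}-e^{-\pi y}}=\frac{e^{2\pi y}+1}{e^{2\pi y}-1}$ yields \eqref{k2y} at once. Identity~(iii) follows in the same way from Theorem~\ref{exampleseries}(2) after dividing both sides by $4y^{3}$ and multiplying by $\pi\sqrt2$; nothing further is needed.

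For (ii), Euler's formula, I would expand both sides of \eqref{seriesk2y2},
\[
\sum_{k=1}^{\infty}\frac{1}{k^{2}+y}=-\frac{1}{2y}+\frac{\pi}{2\,y^{1/2}}\coth\bigl(\pi y^{1/2}\bigr),
\]
as power series near $y=0$. On the left, for $|y|<1$ one interchanges the two absolutely convergent sums to obtain $\sum_{k\ge1}(k^{2}+y)^{-1}=\sum_{j\ge0}(-1)^{j}\zeta_{\Q}(2j+2)\,y^{j}$. On the right, inserting the classical expansion $z\coth z=1+\sum_{n\ge1}\frac{2^{2n}B_{2n}}{(2n)!}z^{2n}$ with $z=\pi y^{1/2}$ shows that the term $-\frac{1}{2y}$ cancels the pole and leaves $\sum_{j\ge0}\frac{2^{2j+1}\pi^{2j+2}B_{2j+2}}{(2j+2)!}\,y^{j}$. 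Both sides are holomorphic at $y=0$ and agree on a real interval, hence coincide near $0$, and comparing the coefficient of $y^{k-1}$ gives $(-1)^{k-1}\zeta_{\Q}(2k)=\frac{(2\pi)^{2k}B_{2k}}{2(2k)!}$, i.e.\ \eqref{Euler}. The ``second proof'' announced in the introduction is this same comparison applied to \eqref{y2nk} with $n$ running over all integers $\ge2$, which recovers every even zeta value.

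For (iv), the equality $\sum_{k\ge0}(k+x)^{-m}=\zeta_{H}(m,x)$ is just the definition of the Hurwitz zeta function, valid for each integer $m\ge2$. For the remaining equality I would take the logarithmic derivative of the Weierstrass product \eqref{weierstrass}, which gives $\Psi(x)=-\gamma-\frac1x+\sum_{n\ge1}\bigl(\frac1n-\frac1{n+x}\bigr)$, and then differentiate this a further $m-1$ times; the differentiation is legitimate term by term because the differentiated series converges locally uniformly on $\C\setminus\Z_{\le 0}$. Using $\frac{d^{m-1}}{dx^{m-1}}(x+n)^{-1}=(-1)^{m-1}(m-1)!\,(x+n)^{-m}$ and collecting signs then gives the asserted relation between $\zeta_{H}(m,x)$ and $\Psi^{(m-1)}(x)$.

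There is no serious difficulty in any of this; the work is bookkeeping. The only two places deserving a word of care are the interchange of the double sum together with the holomorphy at $y=0$ in step~(ii), and the term-by-term differentiation of the Weierstrass series in step~(iv); both are immediate from the local uniform convergence of the series involved, already implicit in \eqref{weierstrass} and in the analytic continuations established in Proposition~\ref{thetam} and Theorem~\ref{general}.
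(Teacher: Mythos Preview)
The paper states Corollary~\ref{cor1.2} without proof, so there is no argument to compare against; your derivations of (i)--(iii) are exactly the rewritings of Theorem~\ref{exampleseries} and of equation~\eqref{seriesk2y2} that the paper carries out just above (the unnamed corollary preceding Corollary~\ref{cor1.2} is your argument for (ii) verbatim), and your treatment of (iv) via the Weierstrass product is the standard route.

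One genuine point to flag in (iv): when you actually ``collect signs'' you will not recover the stated formula. Differentiating $\Psi(x)=-\gamma-\tfrac{1}{x}+\sum_{n\ge1}\bigl(\tfrac{1}{n}-\tfrac{1}{n+x}\bigr)$ a total of $m-1$ times gives
\[
\Psi^{(m-1)}(x)=(-1)^{m}(m-1)!\sum_{n\ge0}\frac{1}{(n+x)^{m}},
\]
so $\zeta_H(m,x)=\dfrac{(-1)^{m}}{(m-1)!}\,\Psi^{(m-1)}(x)$, not $(-1)^{m-1}$ as printed (check $m=2$: $\Psi'(x)=\sum_{n\ge0}(n+x)^{-2}>0$). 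This is a typo in the statement rather than a flaw in your method; your argument proves the correct identity.
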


\section{Appendix : On  Lerch's formula}\label{appendice}

The reflection formula was first proved by Euler. In order to obtain his formula, Euler has to prove the following product formula for the sine function: 
\[
\sin(\pi x)=\pi x  \prod_{k=1}^\infty  \left(1-\frac{x^2}{k^2} \right),
\]
which can be deduced from the following identity
\[
\pi \cot(\pi x)=\sum_{n\in \Z}^{e} \frac{1}{n+x}=\frac{1}{x}+2 x\sum_{n=1}^\infty \frac{1}{x^2-n^2}\quad \forall x\in \R\setminus\Z,
\]
where $\overset{e}{ \sum}$ stands for the Eisenstein summation, see \cite{Weil} for the proof of this identity.
Now,  
\[
\Gamma(x+1)= \prod_{k=1}^\infty  \frac{k^{1-x} (k+1)^x}{x+k}.
\]
Therefore, replacing $x$ by $-x$: 
\[
\Gamma(1-x)= \prod_{k=1}^\infty  \frac{k^{1+x} (k+1)^{-x}}{k-x}.
\]
Therefore,
\[
\Gamma(1+x)\Gamma(1-x)= \prod_{k=1}^\infty  \frac{k^2}{k^2-x^2}= \prod_{k=1}^\infty \frac{1}{1-\frac{x^2}{k^2}}.
\]
So,
\[
\Gamma(x+1)\Gamma(1-x)=\frac{\pi x}{\sin(\pi x)}\tag{Reflection formula }.
\]

In the sequel, we give a different proof of  Lerch's formula and the reflection formula for the Gamma function.  We do not claim that our  proof is new. Probably this method is known, but we could not find a reference containing this proof. \\

We  set
\[
L(x)= \Rprod_{n=0}^\infty  (n+x)\; \text{for}\; x\geq 0.
\]

\begin{theorem}\label{lerchthm}
The function $\ell$ extends to an analytic function on $\C$ which possesses the following 
properties:
\begin{enumerate}
\item $L(x)>0$ for any $x>0$,

\item $L(x+1)=\frac{1}{x}L(x)$ for any $x\in \R$,

\item  For $x\in \C$, \[
L(x)L(-x)=-2x\sin(\pi x),
\]
\item $L(1)=\Rprod_{n=1}^\infty n=\sqrt{2\pi}$.
 
\end{enumerate}
Moreover,
\[
L(x)=\frac{(2\pi)^\frac{1}{2}}{\Gamma(x)}\quad\forall x\in \C \tag{Lerch's formula}, 
\]
and
\begin{equation}\label{reflection}
\frac{1}{\Gamma(x)\Gamma(-x)}=-\frac{1}{\pi} x\sin(\pi x)\quad \forall x\in \C\tag{Reflection formula}.
\end{equation}

\end{theorem}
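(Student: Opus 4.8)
\emph{Proof proposal.} Throughout write $\zeta_H(s,x)=\sum_{n\ge 0}(n+x)^{-s}$ and let a prime denote $\tfrac{\pt}{\pt s}$, so $L(x)=\exp(-\zeta_H'(0,x))$. The argument uses only the meromorphic continuation of $\zeta_H$ in $s$ (holomorphic off $s=1$) and its joint analyticity in $(s,x)$ for $s$ near $0$ and $\mathrm{Re}(x)>0$, both classical (e.g.\ from Hermite's integral representation). Properties (1) and (2) are immediate. For real $x>0$ the function $s\mapsto\zeta_H(s,x)$ is real on the real axis, so $\zeta_H'(0,x)\in\R$ and $L(x)=e^{-\zeta_H'(0,x)}>0$; this is (1). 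From $\zeta_H(s,x+1)=\zeta_H(s,x)-x^{-s}$, differentiating at $s=0$ and using $\tfrac{\pt}{\pt s}x^{-s}|_{s=0}=-\log x$ gives $\zeta_H'(0,x+1)=\zeta_H'(0,x)+\log x$, i.e.\ $L(x+1)=\tfrac1x L(x)$; this is (2), and the same relation propagates analyticity of $L$ from $\{\mathrm{Re}(x)>0\}$ to $\C$ and shows $L$ is entire with simple zeros exactly at $0,-1,-2,\dots$.

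The core is Lerch's formula; granting it, (3), (4) and the reflection formula follow from the classical identities $\Gamma(x)\Gamma(1-x)=\pi/\sin\pi x$, $\Gamma(1)=1$ and $\Gamma(\tfrac12)=\sqrt\pi$ recalled at the head of this appendix. To prove $L(x)=\sqrt{2\pi}/\Gamma(x)$ I would run a Bohr--Mollerup argument on $f(x):=-\log L(x)=\zeta_H'(0,x)$ for $x>0$. From $\tfrac{\pt}{\pt x}\zeta_H(s,x)=-s\,\zeta_H(s+1,x)$ and the Laurent expansion $\zeta_H(1+s,x)=s^{-1}-\Psi(x)+O(s)$ (with $\Psi$ the digamma function), interchanging $\tfrac{\pt}{\pt s}$ and $\tfrac{\pt}{\pt x}$ yields $f'(x)=\Psi(x)$ and hence $f''(x)=\zeta_H(2,x)=\sum_{n\ge 0}(n+x)^{-2}>0$, so $f$ is convex on $(0,\infty)$. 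Together with $f(x+1)=f(x)+\log x$ from (2), the Bohr--Mollerup theorem forces $f(x)=f(1)+\log\Gamma(x)$ on $(0,\infty)$.

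It then remains to identify $f(1)=\zeta_H'(0,1)=\zeta_\Q'(0)$, which I would do by comparing two evaluations at $x=\tfrac12$. On one side $f(\tfrac12)=f(1)+\log\Gamma(\tfrac12)=f(1)+\tfrac12\log\pi$. On the other, the elementary splitting $\zeta_H(s,\tfrac12)=(2^s-1)\zeta_\Q(s)$ together with the special value $\zeta_\Q(0)=\zeta_H(0,1)=-\tfrac12$ (itself a case of $\zeta_H(0,x)=\tfrac12-x$) gives, since the factor $2^s-1$ vanishes at $s=0$, $f(\tfrac12)=\zeta_H'(0,\tfrac12)=\log 2\cdot\zeta_\Q(0)=-\tfrac12\log 2$. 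Hence $f(1)=-\tfrac12\log(2\pi)$, so $-\log L(x)=\log\Gamma(x)-\tfrac12\log(2\pi)$ on $(0,\infty)$, and by analytic continuation (both sides entire) $L(x)=\sqrt{2\pi}/\Gamma(x)$ on all of $\C$. Now $L(1)=\sqrt{2\pi}/\Gamma(1)=\sqrt{2\pi}$ is (4); and from $\Gamma(-x)=\Gamma(1-x)/(-x)$ one gets $\Gamma(x)\Gamma(-x)=-\pi/(x\sin\pi x)$, whose reciprocal is the reflection formula $1/(\Gamma(x)\Gamma(-x))=-\tfrac1\pi\,x\sin\pi x$, and multiplying by $2\pi$ gives property (3): $L(x)L(-x)=-2x\sin\pi x$. (Alternatively, (3) can be proved directly, bypassing Lerch, by noting that $L(x)L(-x)$ and $-2x\sin\pi x$ share the logarithmic derivative $\pi\cot\pi x+\tfrac1x$ — using $(\log L)'=-\Psi$ and the cotangent identity recalled above — and agree at $x=\tfrac12$.)

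The genuine difficulties I anticipate are: (a) rigorously justifying the interchange of $\tfrac{\pt}{\pt s}$ and $\tfrac{\pt}{\pt x}$ and the term-by-term differentiation that produce $f'=\Psi$ and $f''=\zeta_H(2,\cdot)$, which follows from the joint analyticity of $\zeta_H$ and locally uniform convergence on $\{\mathrm{Re}(x)>0\}$; and (b) a precise statement and application of Bohr--Mollerup (convexity, plus the translation equation, plus one value, determine the function). The inputs $\zeta_H(0,x)=\tfrac12-x$, $\Gamma(\tfrac12)=\sqrt\pi$ and the joint analyticity are classical and would be cited rather than reproved.
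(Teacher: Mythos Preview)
Your argument is correct, and parts (1), (2), the analytic continuation of $L$, and the appeal to Bohr--Mollerup match the paper exactly (in fact you are more careful than the paper in verifying the log-convexity hypothesis via $f''(x)=\zeta_H(2,x)>0$; the paper simply cites Bohr--Mollerup). The two proofs diverge at the remaining two steps, in a way worth noting.

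\textbf{Evaluating the constant $L(1)=\Rprod_{n\ge 1}n$.} You pin it down by evaluating $\zeta_H'(0,\tfrac12)$ two ways: once via $\zeta_H(s,\tfrac12)=(2^s-1)\zeta_\Q(s)$, once via Bohr--Mollerup and $\Gamma(\tfrac12)=\sqrt\pi$. The paper instead integrates: from $\int_1^\infty x^{-s}\,dx=\int_0^1\zeta_H(s,x+1)\,dx$ it extracts $\log\bigl(\Rprod n\bigr)=\int_0^1\log\Gamma(x)\,dx$, and then computes the Raabe integral using the reflection formula and $\int_0^1\log\sin(\pi x)\,dx=-\log 2$. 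Your route is shorter and needs only $\Gamma(\tfrac12)=\sqrt\pi$ (available from the Gaussian integral); the paper's route is the classical Raabe computation.

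\textbf{Property (3) and the reflection formula.} You deduce $L(x)L(-x)=-2x\sin\pi x$ from Lerch together with Euler's reflection formula $\Gamma(x)\Gamma(1-x)=\pi/\sin\pi x$, which the appendix has just reviewed. The paper goes the other way: it proves (3) \emph{independently}, by observing that $L$ is entire of finite order (citing Voros), so $z\sin(\pi z)/(L(z)L(-z))$ is entire of finite order with no zeros, hence $e^{P(z)}$ by Hadamard; periodicity and evenness force $P$ to be constant. The reflection formula for $\Gamma$ then falls out as a corollary. This is the point of the appendix (``we give a different proof of\ldots\ the reflection formula''): the paper wants a derivation of Euler's formula from the regularized-product side, whereas your argument imports it. Your parenthetical alternative for (3), comparing logarithmic derivatives via the cotangent partial-fraction identity, is closer in spirit to a self-contained proof and would serve the same purpose as the paper's Hadamard argument without the finite-order machinery.
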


\begin{proof}

Let $x>0$. $\zeta_H(s,x)$ has the following integral representation for any $\mathrm{Re}(s)>1$,
\[
\begin{split}
\zeta_H(s,x)=&\frac{1}{
\Gamma(s)}\int_0^1 t^{s-2}e^{-xt}dt+\frac{1}{
\Gamma(s)}\int_0^1 t^{s-1}\left(\frac{1}{1-e^{-t}}-\frac{1}{t} \right)e^{-xt}dt\\
&+\frac{1}{
\Gamma(s)}\int_1^\infty t^{s-1}\frac{e^{-xt}}{1-e^{-t}}dt
\end{split}
\]
It is clear that the first and the third integrals have a analytic continuation 
to $s=0$, and their derivatives with respect to $s$ at $0$ is an analytic function in $x$. 
\[
\frac{1}{1-e^{-t}}-\frac{1}{t}
\]
is an analytic function for any $t$. We conclude that 
$L(x)$ is an analytic function on $(0,+\infty)$, and hence on $\mathrm{Re}(x)>0$. \\

Let $x \in \C$. Let $n_x=[-\mathrm{Re}{(x)}]+1$. The series
\[
\sum_{n\geq n_x}\frac{1}{(n+x)^s}=\sum_{n=0}^\infty\frac{1}{(n+n_x+x)^s},
\]
is well defined.  By definition,
\[
L(n_x+x)= \Rprod_{n=0}^\infty  (n+n_x+x)
\]
We set
\[
 \Rprod_{n=0}^\infty (n+x)=L(n_x+x)\prod_{n=0}^{n_x-1}(n+x).
\]
So, $\ell$ can be continued analytically to $\C$.

 Note that this result is a special case of theory developed in \cite{Voros}.\\

\item

\begin{enumerate}
\item
By definition, we have for any $x>0$,
$L(x)=\exp\left(-\frac{\pt}{\pt s}\zeta_H(s,x)_{|_{s=0}}\right)$. 
 It is clear 
that $\zeta_H(s,x)\in \R$ when $\mathrm{Re}(s)>1$, and its meromorphic continuation is   a real-valued function on $\R$. So,
\[{\frac{\pt}{\pt s}\zeta_H(s,x)_{|_{s=0}}} \in \R\;\text{ for any $x>0$},\]
 and hence
\[
L(x)>0,\quad \forall x>0.
\]

\item 
Since,
\[
\zeta_H(s,x+1)=\zeta_H(s,x)-\frac{1}{x^s},\quad \forall x>0,
\]
Then,
\[
L(x+1)=\frac{1}{x}L(x)\quad \forall x>0.
\]
By the uniqueness of the analytic continuation, the above identity holds on $\C$.

We know that
\[
L(x)L(-x)= \Rprod_{n=0}^\infty  (n^2-x^2).
\]
\item 

By Bohr-Mollerup theorem (see \cite[Theorem 2.1 p.14]{ArtinGamma}),
\[
L(x)=\frac{\prod_{n=1}^\infty n}{ \Gamma(x)}\quad\forall x\in \C,
\]
(we recall that $L(1)=\prod_{n=1}^\infty n$).\\ 

According to \cite{Voros}, $L(z)$ is a holomorphic function of the complex variable $z$ whose zeros (counted with multiplicity) are the numbers $-1,-2,\ldots $, which is by bounded by
$\exp(a+b|z|^N)$  for some constants $a,b$ and $N$ (i.e. $L(z)$ is a function of finite order, see \cite[p. 138]{SteinComplex}).  
The sine function is an entire function of finite order.  \\

We can conclude that the following function \[
f(z)=\frac{z \sin(\pi z)}{L(z)L(-z)},
\]
is entire  of finite order without zeros on $\C$. By Hadamard's factorization theorem, there exists a polynomial $P$ such that
\[
f(z)=e^{P(z)}.
\]
We can verify easily that $f(z+1)=f(z)$   for any $z\in \C$, and since $P$ is continuous,  then $P(z+1)-P(z)$ is the constant polynomial. It follows that  $P(z)=\alpha z+\beta$ for some $\alpha,\beta\in \C$.  
Taking the logarithmic derivative of $f$, we get
\[
a=\frac{f'(z)}{f(z)}=\frac{f'(0)}{f(0)}\quad \forall z\in \C.
\]
But $f(z)=f(-z)$ for any $z\in \C$. Then,
$a=0$, and hence  $f(z)=f(0)$ for any $ z\in \C.$ That is,
\[
 \frac{z \sin(\pi z)}{L(z)L(-z)}=-\frac{\pi }{\Rprod_{n=1}^\infty n^2}\quad\forall z\in \C.
\]
 
 Recall that $L(z)=(\Rprod_{n=1}^\infty n)/\Gamma(z)$. Then, we obtain the reflection formula for the Gamma function:
 \begin{equation}\label{reflection2}
- z \sin(\pi z)=\frac{\pi}{\Gamma(z)\Gamma(-z)}\quad \forall z. 
 \end{equation}
(we have used  that $\Rprod_{n=1}^\infty n^2=(\Rprod_{n=1}^\infty n)^2$).
\end{enumerate}
\end{proof}

 \vskip 1cm
 
 Let us evaluate $\Rprod_{n=1}^\infty n$. We have
\[
\int_1^\infty \frac{1}{x^s}dx=\int_0^1 \zeta_H(s,x+1)dx,\quad \forall \ \mathrm{Re}(s)>1.
\]
By a meromorphic continuation, and elementary computations, we obtain,
\[
-1=\int_0^1\frac{\pt}{\pt s}\zeta_H(0,x+1)dx=-\log( \prod_{n=1}^\infty n) -1+\int_0^1 \log \Gamma(x)dx.
\]
That is,
\[
\int_0^1 \log \Gamma(x)dx=\log( \prod_{n=1}^\infty n).
\]
The computation of $\int_0^1\log \Gamma(x)dx$ is well known, and uses \ref{reflection2}. For reader's convenience, we recall the proof here. We have
\[
\begin{split}
\int_0^1 \log \Gamma(x)dx=&\int_0^1 \log \Gamma(1-x)dx\\
=& \log \pi -\int_0^1 \log \Gamma(x)dx  -\int_0^1 \log \sin (\pi x) dx \quad \text{(by \ref{reflection2})}.
\end{split}
\] 
Using  an elementary change of variables,   $-\int_0^1 \log \sin (\pi x) dx=-\log 2$. We conclude that
\[
\int_0^1 \log \Gamma(x)dx=\frac{1}{2}\log (2\pi).
\]
Then,
\[
\Rprod_{n=1}^\infty n=(2\pi)^\frac{1}{2}. 
\]

We conclude that
\[
L(z)L(-z)= 2 z \sin(\pi z)\quad\forall z\in \C.
\]
This ends the proof of the theorem.
\bibliographystyle{plain}

\bibliography{biblio}

\end{document}